\declaretheorem[style = plain, numberwithin = section]{theorem}
\declaretheorem[style = plain,      sibling = theorem]{corollary}
\declaretheorem[style = plain,      sibling = theorem]{lemma}
\declaretheorem[style = plain,      sibling = theorem]{proposition}
\declaretheorem[style = definition, sibling = theorem]{definition}
\declaretheorem[style = definition, sibling = theorem]{example}
\declaretheorem[style = plain,      sibling = theorem]{conjecture}
\declaretheorem[style = remark,    numbered = yes]{remark}
\DeclareMathOperator{\Jac}{Jac}
\DeclareMathOperator{\di}{\delta}
\DeclareMathOperator{\ord}{ord}
\DeclareMathOperator{\Hess}{Hess}
\DeclareMathOperator{\trace}{tr}
\DeclareMathOperator{\adj}{adj}
\newcommand{\N}{\mathbb{N}}   % Natural numbers
\newcommand{\C}{\mathbb{C}}   % Complex numbers
\renewcommand{\P}{\mathbb{P}} % Projective space
\newcommand{\cA}{\mathcal{A}}
\newcommand{\cB}{\mathcal{B}}
\newcommand{\cC}{\mathcal{C}}
\newcommand{\cF}{\mathcal{F}}
\newcommand{\cG}{\mathcal{G}}
\newcommand{\cH}{\mathcal{H}}
\newcommand{\ol}{\overline}
\def\di{\partial}
\def\Ø{\emptyset}
\crefname{listing}{Program}{Programs}
\Crefname{listing}{Program}{Programs}
\begin{document}
\title[The 2-Hessian and Sextactic Points on Plane Algebraic Curves]{The 2-Hessian and Sextactic Points\\on Plane Algebraic Curves}
\author{Paul Aleksander Maugesten} 
\address{Department of Mathematics\\  University of Oslo\\ P.O. Box 1053 Blindern\\ NO-0316 Oslo\\ NORWAY} 
\email{paulamau@math.uio.no} 
\author{Torgunn Karoline Moe} 
\address{The Science Library\\ University of Oslo\\ P.O. Box 1063 Blindern\\ NO-0316 Oslo\\ NORWAY} 
\email{torgunnk@math.uio.no}
\begin{abstract}
 In an article from 1865, Arthur Cayley claims that given a plane algebraic curve there exists an associated 2-Hessian curve that intersects it in its sextactic points. In this paper we fix an error in Cayley's calculations and provide the correct defining polynomial for the 2-Hessian. In addition, we present a formula for the number of sextactic points on cuspidal curves and tie this formula to the 2-Hessian. Lastly, we consider the special case of rational curves, where the sextactic points appear as zeros of the Wronski determinant of the 2nd Veronese embedding of the curve.
\end{abstract}
\maketitle
\section{Introduction}\label{sec:intro}
Let $C=V(F)$ be an algebraic curve of geometric genus $g$ and degree $d$, given by a polynomial $F \in \C[x,y,z]_d$, in the projective plane $\P^2$ over $\C$. In standard terms, a point $p$ on an irreducible curve $C$ is called singular if all the partial derivaties of $F$ vanish, and smooth otherwise. Given two curves $C$ and $C'$ and a point $p$ in the intersection, let $(C \cdot C')_p$ denote the intersection multiplicity of $C$ and $C'$ at $p$. Moreover, for any point $p \in C$, let $m_p$ denote its multiplicity, i.e. the intersection multiplicity at $p$ of $C$ and a generic line. 

Now, given an irreducible curve $C$ and fixed $n \in \N$, consider curves, not necessarily irreducible, of degree $n$ in $\P^2$. With $r(n)=\frac{1}{2}n(n+3)$ and $nd \geq r(n)$, for every smooth point $p \in C$ there exists a curve of degree $n$ such that the local intersection multiplicity is equal to or bigger than $r(n)$, referred to as an \emph{osculating curve} to $C$ at $p$, see \cite{Arn96}. A smooth point where the intersection multiplicity between a curve of degree $n$ and $C$ is strictly bigger than $r(n)$ is referred to as an \emph{$n$-Weierstrass point}. In this case, the curve of degree $n$ is called a \emph{hyperosculating curve}. 

For $n=1$ this comes down to tangent lines to $C$, for each smooth point $p$ denoted by $T_p$ and given by the linear polynomial $xF_x(p)+yF_y(p)+zF_z(p)$, with the property that $l_p=(T_p \cdot C)_p\geq 2$. Thus, the smooth {$1$-Weierstrass} points on $C$, for which $l_p>2$, are nothing but the \emph{inflection points}. In particular, if $l_p=3$, $p$ is called a \emph{simple} inflection point, and, in general, the order of an inflection point is given by $v_p=l_p-2$.

The main focus of this paper is the case $n=2$. For every smooth point $p$ on a curve $C$ of degree $d \geq 3$ there exists a unique osculating curve of degree $2$, denoted by $O_p$, with $c_p=(O_p \cdot C)_p\geq5$ \cite{Cay59}. In particular, we look at $2$-Weierstrass points, where $c_p>5$, which includes inflection points and \emph{sextactic points}, first studied for curves of arbitrary high degree by Cayley in~\cite{Cay65}. 

In a broader context, Weierstrass points of curves in any projective space with respect to a linear system $Q$ have been intensively studied, both classically in the case of smooth curves, and more recently for singular curves \cite{BG97, Del08, Lak84,  LW90, Per90, Pie76}. One consequence of this research is that for a singular plane curve and a linear system $Q$, the $Q$-Weierstrass points include both the smooth $Q$-Weierstrass points and the singular points \cite{BG97}. 

Back in $\P^2$, in the case $n=1$ it is well known that the \emph{Hessian curve} to $C$, of degree $3(d-2)$, given by the polynomial
\[H=H_1(F)=
\begin{vmatrix}
F_{xx} & F_{xy} & F_{xz} \\
F_{yx} & F_{yy} & F_{yz} \\
F_{zx} & F_{zy} & F_{zz} \\
\end{vmatrix},
\]
intersects $C$ in its inflection points and singular points, i.e. its $1$-Weierstrass points. 

In \cite{Cay65} Cayley presents a curve with similar properties: a curve of degree $12d-27$ that intersects $C$ in its sextactic points, higher order inflection points, and its singular points. The first main result of this article is a correction of Cayley's defining polynomial for such a curve, referred to as the \emph{$2$-Hessian} to $C$. See \vref{sec:2hessian} for notation. 
\begin{theorem}[The $2$-Hessian]
	\label{thm:2hessian}
	Let $C=V(F)$ be a plane curve of degree $d \geq 3$, with $H$ the defining polynomial of the Hessian curve to $C$. Then there exists a curve of degree $12d-27$ given as the zero set of
	\begin{align*}
	H_2=H_2(F) = & \; (12d^2-54d+57)H \Jac(F,H,\Omega_{\bar H})\\
	 & \;+ (d-2)(12d-27)H \Jac(F,H,\Omega_{\bar F}) \\
	 & \;-\mathbf{20}(d-2)^2 \Jac(F,H,\Psi),
	\end{align*}
	such that the intersection points between $C$ and this curve are the singular points, the higher order inflection points, and the sextactic points of $C$.
\end{theorem}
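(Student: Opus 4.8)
The plan is to verify the three required properties separately: (i) the polynomial $H_2$ has the claimed degree $12d-27$; (ii) at every smooth point $p$ of $C$, the curve $V(H_2)$ meets $C$ at $p$ if and only if $p$ is a $2$-Weierstrass point (inflection point of order $\geq 2$ or sextactic point); and (iii) every singular point of $C$ lies on $V(H_2)$. The degree count in (i) is bookkeeping: $H$ has degree $3(d-2)$, and each term is built from $H$ times a Jacobian $\Jac(F,H,\Omega_{\bar\cdot})$ or a Jacobian $\Jac(F,H,\Psi)$, so one just checks that $\Omega_{\bar F}$, $\Omega_{\bar H}$ and $\Psi$ have the degrees recorded in \Cref{sec:2hessian} and that each summand comes out to $12d-27$. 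The singular-point claim (iii) should follow because $F$, $H$, and each auxiliary polynomial $\Omega_{\bar F}, \Omega_{\bar H}, \Psi$ are polynomials in $F$, $H$ and their partial derivatives; since $H$ itself vanishes at every singular point of $C$ (a classical fact, as the Hessian passes through the singular locus), and since every term of $H_2$ carries either an explicit factor of $H$ or a Jacobian $\Jac(F,H,\cdot)$ whose rows all vanish once $\nabla F$ vanishes, the whole expression vanishes there. This requires being slightly careful about which terms need the vanishing of $\nabla F$ versus the vanishing of $H$, but it is not the crux.

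The heart of the argument is (ii), and the natural approach is a local computation in affine coordinates. Fix a smooth point $p$; after a projective change of coordinates put $p$ at the origin with tangent line $y=0$, and dehomogenise, so that $C$ is locally the graph $y = f(x) = a_2 x^2 + a_3 x^3 + a_4 x^4 + a_5 x^5 + \dots$ with $a_2 \neq 0$ (the case $a_2 = 0$ being an inflection point, which is handled along the way). The unique osculating conic $O_p$ at $p$ is the conic through the $5$-jet of this graph; $p$ is sextactic precisely when $O_p$ agrees with the graph to order $6$, which translates into a single polynomial condition $\Phi(a_2,a_3,a_4,a_5,a_6) = 0$ on the Taylor coefficients. (One can compute $\Phi$ explicitly: it is the coefficient obstructing the solvability of the linear system that fits a conic to the $6$-jet, a classical determinant.) The plan is then to substitute this local normal form into $H$, $\Omega_{\bar F}$, $\Omega_{\bar H}$, $\Psi$ and the three Jacobians, expand to the relevant order in $x$, and show that $H_2$ restricted to $C$ vanishes at $x=0$ to positive order exactly when $\Phi = 0$ or when $a_2 = 0$ with the appropriate higher-inflection condition. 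Concretely, one shows that the leading term of $H_2|_C$ as a function of the local parameter is (a nonzero constant times a power of $a_2$ times) $\Phi$, up to the inflectionary factor; this simultaneously proves that $V(H_2) \cap C$ contains no extraneous smooth points and that it contains all sextactic and higher inflection points.

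An alternative, more conceptual route for (ii) is to reconstruct Cayley's derivation intrinsically: the osculating conic $O_p$ can be written down as a $6\times 6$ determinant in $F$ and its first and second derivatives (the condition that a conic $\sum c_{ij} x_i x_j$ hyperosculates $C$ at $p$), and $p$ is sextactic iff a certain further derivative of this determinant vanishes. One differentiates the defining relation along $C$ using the Euler relations and the chain rule, repeatedly replacing $z$-derivatives via homogeneity, until the sextactic condition is expressed purely in terms of $F$, $H$, $\Omega_{\bar F}$, $\Omega_{\bar H}$, $\Psi$; matching coefficients then pins down the constants $12d^2-54d+57$, $(d-2)(12d-27)$, and $-20(d-2)^2$, and in particular exhibits the corrected coefficient $\mathbf{20}$ (versus Cayley's erroneous value). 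This is presumably how the authors found the formula, but for a clean proof the local-coordinates verification of (ii) is more robust.

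The main obstacle I anticipate is controlling the bookkeeping in (ii): the polynomials $H$, $\Omega_{\bar F}$, $\Omega_{\bar H}$, $\Psi$ are high-degree expressions in the second and third partial derivatives of $F$, and each Jacobian adds another derivative, so $H_2|_C$ must be expanded to rather high order in the local parameter before the sextactic coefficient $\Phi$ emerges; keeping track of which monomials in $a_2, \dots, a_6$ survive, and verifying that the non-$\Phi$ contributions cancel (this is exactly where Cayley slipped), is delicate and is best organised by first treating the generic case $a_2 \neq 0$, then the inflectionary case $a_2 = 0$ separately, and finally invoking a semicontinuity or Bézout-count consistency check (comparing $\deg H_2 \cdot \deg F$ against the total count of sextactic plus inflection plus singular contributions, with multiplicities) to confirm that no solutions have been missed or introduced.
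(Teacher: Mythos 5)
There is a genuine gap: your argument never actually establishes the formula. The entire content of the theorem is the determination of the three coefficients, and in particular the corrected coefficient $-20(d-2)^2$ in place of Cayley's $-40(d-2)^2$; yet both of your routes for step (ii) stop exactly where this would have to be done (``expand \dots and show that the non-$\Phi$ contributions cancel'', ``matching coefficients then pins down the constants''), and you yourself flag this bookkeeping as the anticipated obstacle rather than carrying it out. Since Cayley's published polynomial differs from the correct one only in that single coefficient, every part of your sketch that is actually executed --- the degree count $12d-27$ and the containment of the singular points (each term has a Jacobian with $F_x,F_y,F_z$ as a row) --- holds just as well for Cayley's \emph{incorrect} polynomial, so the proposal as written cannot distinguish the true statement from the false one. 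It is a plan for a proof, not a proof.

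For comparison, the paper does not reprove the theorem from scratch: it corrects Cayley's own derivation at the one step where it fails. Cayley's identity
$\Psi\,\di H = \frac{\frac{1}{2}}{4d-9}\,\vartheta\,\Jac(F,\Psi,H) + \frac{\frac{3}{2}(d-2)}{4d-9}\,H\,\di\Psi$
is correct, but when forming $9HW+40\,\Psi\,\di H$ he forgets the factor $\frac{1}{2}$, so the bracket should contain $20\,\Jac(F,\Psi,H)$ rather than $40$; following Cayley's remaining Sections 20--25 with this corrected identity propagates to the coefficient $-20(d-2)^2$ in \cref{thm:2hessian}. Your second, ``conceptual'' route is essentially Cayley's derivation, and your first route (local normal form $y=a_2x^2+\cdots$, extracting the sextactic obstruction $\Phi$ from $H_2|_C$) could in principle work as an independent verification, but either way the decisive coefficient computation must actually be performed --- or, as the paper does, the precise erroneous step must be located and redone --- before the statement is proved. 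One further point the paper records that you would also need: the argument is local, which is what justifies the conclusion for singular curves even though Cayley stated his result for smooth ones.
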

\noindent Abusing notation we refer to both the Hessian curve and its defining polynomial as $H$, and similarly we write $H_2$ for both the $2$-Hessian curve and its defining polynomial.

Observe that the curve $V(H \cdot H_2)$ intersects $C$ in its $2$-Weierstrass points.

The situation is more complex for $n \geq 3$ and for curves in higher dimensional spaces. We refer to \cite{Cuk97} for modern results on higher Hessians to smooth plane curves and, in higher dimensional spaces, generalized Hessians to smooth curves that are complete intersections.     

\medskip
\noindent To complete the picture for $n=2$, we turn to another angle. Instead of studying sextactic points on a curve using its defining polynomial and the $2$-Hessian, we count the number of sextactic points using simple invariants of the curve, its inflection points, and its singular points. 

Recall that singular points on plane curves exist in many different shapes. Moreover, a singular point can be described by its multiplicity, delta invariant and number of branches. In particular, a singular point is called a \emph{cusp} if it is unibranched, and up to topological type, a cusp $p$ can be described by its multiplicity sequence, $\ol{m}_p$, the ordered sequence of multiplicities of the points above $p$ in the partial minimal embedded resolution of $C$ at $p$, see \cite[p.~503]{BK86}.

Curves with only cusp singularities, \emph{cuspidal curves}, have been thoroughly studied the last 25 years, see \cite{Moe08, Moe13} for an overview. The research has been motivated by both classification problems and the connections such curves have to problems in the theory of open surfaces. We mention briefly the Flenner--Zaidenberg rigidity conjecture, the Orevkov--Piontkowski conjecture, and the recently proved Coolidge--Nagata conjecture \cite{KP17}. 

The second main result in this article is a formula for the number of sextactic points on cuspidal curves, for which the invariants involved are easily accessible. Indeed, for a cusp $p$ with multiplicity $m_p$, there exists a unique line $T_p$ such that $l_p=(T_p \cdot C)_p>m_p$, referred to as the tangent line to $C$ at $p$. Similarly, applicable only to cusps where $l_p=2m_p$, there exists a, not necessarily unique, conic $O_p$ with $c_p=(O_p \cdot C)_p>2m_p$ and $c_p \neq3m_p,4m_p$, referred to as an osculating conic to $C$ at $p$, see \vref{lem:oscint}.

\begin{theorem}[Sextactic point formula]
\label{thm:SPF}
Let $C$ be a cuspidal curve of genus $g$ and degree $d \geq 3$. Let $I$ denote the set of inflection points and cusps on $C$ where $l_p \neq 2m_p$, and let $J$ denote the set of cusps on $C$ where $l_p=2m_p$. Then the number of sextactic points $s$ on $C$, counted with multiplicity, is given by
\begin{align*}
%\label{eq:SPF-formula}
s = 6(2d + 5g - 5) - \sum_I (4m_p + 4l_p - 15)-\sum_J(10m_p+c_p-15).
\end{align*}
\end{theorem}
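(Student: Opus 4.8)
The plan is to count sextactic points as the zeros of a suitable Wronskian-type section, using the theory of $2$-Weierstrass points and correcting the count at the singular points by a local contribution at each cusp. First I would pass from $C$ to its normalization $\nu\colon \widetilde{C}\to C$, which has genus $g$, and pull back the degree-$d$ linear system $\mathfrak{g}$ cutting out lines, and its symmetric square $\mathrm{Sym}^2\mathfrak{g}$, which has projective dimension $5$ (generically) and is contained in the complete linear system attached to the line bundle $\nu^*\mathcal{O}_C(2)$ of degree $2d$. The $2$-Weierstrass points of $C$ are exactly the ramification points of the associated map $\widetilde{C}\to\P^5$ given by the (possibly incomplete) linear system $V\subseteq H^0(\widetilde{C},\nu^*\mathcal{O}(2))$ of conics; the total ramification is governed by the Plücker formula for linear systems on curves (see \cite{BG97}), namely $\sum_{q\in\widetilde C} w_q(V) = 6\deg(\nu^*\mathcal O(2)) + \binom{6}{2}(2g-2) = 6\cdot 2d + 15(2g-2) = 6(2d+5g-5)$, where $w_q(V)$ is the Weierstrass weight of $V$ at $q$ with respect to the standard order sequence $(0,1,2,3,4,5)$. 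This accounts for the global number $6(2d+5g-5)$ and reduces the theorem to a purely local statement: at each point $q$ over an inflection point or cusp, identify how much of $w_q(V)$ comes from inflectionary behaviour and/or from the singularity, and subtract it so that only the genuinely sextactic weight remains.

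Second, I would carry out the local analysis branch by branch. At a smooth point that is an ordinary point, $w_q(V)=0$; at a smooth inflection point of order $v_p=l_p-2$, a direct computation with the local parametrization $t\mapsto(t, \text{higher order})$ and the six conics $1,x,y,x^2,xy,y^2$ shows the vanishing sequence jumps, producing weight $4v_p = 4(l_p-2)$ — and since there $m_p=1$, this equals $4m_p+4l_p-12 = (4m_p+4l_p-15)+3$; the "$+3$" is precisely the weight that a smooth inflection point \emph{should} keep, reflecting the fact that an inflection point is always a $2$-Weierstrass point. So the correction term $\sum_I(4m_p+4l_p-15)$ is exactly the excess weight that is \emph{not} sextactic. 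For a cusp $p$ with $l_p\neq 2m_p$ the same local model (using the multiplicity sequence and \vref{lem:oscint} to control $l_p$ and the fact that no special osculating conic exists) should again yield a weight of the form $4m_p+4l_p-12$, so it is handled identically and lands in the set $I$. For a cusp with $l_p=2m_p$, the point lies in $J$, an osculating conic $O_p$ with $c_p=(O_p\cdot C)_p$ exists (\vref{lem:oscint}), and the local Weierstrass weight of $V$ at $q$ works out to $10m_p+c_p-12$, of which $10m_p+c_p-15$ is the non-sextactic excess and $3$ is retained. Summing, $s = 6(2d+5g-5) - \sum_I(4m_p+4l_p-15) - \sum_J(10m_p+c_p-15)$, as claimed, with the understanding that each listed point $p$ contributes its residual weight $3$ back into $s$ only if it is genuinely sextactic, which is consistent because an inflection point or special cusp is counted once as a sextactic point exactly when $c_p>5$.

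The main obstacle — and the step I would spend the most care on — is the local weight computation at a cusp, especially in case $J$ where $l_p=2m_p$. One has to choose a good local parametrization $(x(t),y(t))$ compatible with the multiplicity sequence $\overline m_p$, write each of the six monomials of degree $\le 2$ as a power series in $t$, and compute the $6\times 6$ Wronskian order, i.e. the sum of the jumps in the vanishing sequence minus $0+1+2+3+4+5$. The subtlety is that the value $c_p$ (and the exclusion $c_p\neq 3m_p,4m_p$ from \vref{lem:oscint}) enters through the order of vanishing of the osculating conic, and one must verify that the remaining five conics through $q$ contribute the "expected" orders $0,m_p,l_p,\dots$ with no further coincidences; degenerate sub-cases (e.g. when two of these orders collide) need separate checking, and this is where a careful case analysis using the structure of the semigroup of the cusp is unavoidable. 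A secondary technical point is ensuring that $\mathrm{Sym}^2\mathfrak g$ has the expected dimension $5$ (i.e. the curve does not lie on a conic, automatic for $d\ge 5$ and checkable by hand for $d=3,4$) so that the Plücker formula applies with order sequence $(0,1,2,3,4,5)$; if $d\le 2$ the statement is vacuous or trivial, and the $d=3,4$ cases can be dispatched directly. Finally I would cross-check the formula against \vref{thm:2hessian}: the intersection $C\cap V(H\cdot H_2)$ should have total degree $d\cdot(3(d-2)) + d\cdot(12d-27)$, and subtracting the inflectionary and singular contributions (via the known Hessian intersection numbers and the local intersection multiplicities $(H_2\cdot C)_p$ at cusps, computed from $\overline m_p$) must reproduce the same $s$, giving an independent sanity check on the local numbers.
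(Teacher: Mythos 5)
Your global step coincides with the paper's: the number $6(2d+5g-5)$ is the Brill--Segre/Pl\"ucker count for the linear system of conics pulled back to the normalization (this is exactly \cref{prop:BG34}), and the theorem reduces to computing the local $2$-Weierstrass weight at each inflection point and cusp from the vanishing orders of the six conic monomials along a Puiseux branch. The gap is precisely in that local computation: your weights are systematically too large by $3$, and the ``$+3$ retained'' device you introduce to compensate is wrong. At a smooth inflection point with branch $(t:at^{l}+\cdots:1)$ the orders of $z^2,xz,x^2,yz,xy,y^2$ are $0,1,2,l,l+1,2l$, so the weight is $(l-3)+\bigl((l+1)-4\bigr)+(2l-5)=4l-11=4m_p+4l_p-15$, not $4v_p=4l_p-8$; in particular a simple inflection point has weight $1$, not $4$. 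Sanity check on a smooth cubic: the total weight is $36$, and the curve has nine inflection points of weight $1$ plus $27$ sextactic points; with weight $4$ per inflection the nine inflections would already exhaust the whole budget of $36$, leaving no room for the $27$ sextactic points that your own formula (correctly) predicts. Likewise for cusps the correct weights are $4m_p+4l_p-15$ when $l_p\neq 2m_p$ and $10m_p+c_p-15$ when $l_p=2m_p$, i.e.\ exactly the quantities subtracted in the statement, not $4m_p+4l_p-12$ and $10m_p+c_p-12$.

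Consequently nothing is ``retained'': by \cref{def:sextactic_pt} a sextactic point is a smooth point that is not an inflection point, so every point of $I$ and $J$ contributes $0$ to $s$ and its entire weight must be subtracted, while a smooth non-inflection point has vanishing orders $0,1,2,3,4,c_p$ and weight $c_p-5=s_p$, and it is these residual weights that sum to $s$. Your hedge that an inflection point or cusp is ``counted once as a sextactic point exactly when $c_p>5$'' contradicts the definition and makes your accounting internally inconsistent (as the cubic example shows). Once the local orders are computed correctly --- this is the content of \cref{lem:oscint}, including the existence of an osculating conic at a cusp with $l=2m$ and the constraints $c_p>2m_p$, $c_p\neq 3m_p,4m_p$, where the degenerate subcases you flag ($b=3m,4m$, colliding orders) are resolved by explicit linear combinations of the conic monomials --- the bookkeeping closes and your argument becomes the paper's proof via \cref{eq:wp-used}. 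Your concerns about the conic system having projective dimension $5$ and the cross-check against the $2$-Hessian are reasonable but peripheral; the former is automatic since an irreducible curve of degree $d\geq 3$ does not lie on a conic.
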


\begin{remark}
	Although the formula in \cref{thm:SPF} is a new restriction for cuspidal curves, it is superseded by other well known results.
\end{remark}

The formula in \cref{thm:SPF} has a classical touch and resembles the Pl\"ucker formula for inflection points; see \cite[p.~586]{BK86} for a formula for curves with any kind of singularities, attributed to Weierstrass and Noether.

Not surprisingly, there exist similar formulas for the number of sextacic points on plane curves, subject to different restrictions, in the literature. A formula for smooth curves is explicitly stated by Thorbergsson and Umehara in \cite[p.~90]{TU02}, and a formula for curves with certain singularties is given by Coolidge, attributed to Cayley, in \cite[Theorem 4, p.~280]{Coo31}. Both of these formulas coincide with the formula in \cref{thm:SPF} under the appropriate restrictions and generalizations. Moreover, as briefly commented by Coolidge in \cite[p.~280]{Coo31}, it is possible to find a formula for the number of sextactic points valid for any plane curve. Indeed, the formula in \cref{thm:SPF} can be directly extended to curves with multibranched singularities by dealing with one branch at the time. We do not state this formula explicitly, but we discuss some of the necessary generalizations in \vref{rem:gensing2}. Additionally, note that a method to compute the number of sextactic points on a plane cuve with any kind of singularities has been presented, with a very similar approach to ours, by Perkinson in \cite[pp.~42--44]{Per90}, and that a formula would also follow from such calculations.

For higher $n$, a formula for the number of $n$-Weierstrass points would require a stratification of the Weierstrass points and special treatments of each subgroup, as in the case of $I$ and $J$ in \cref{thm:SPF}. As described in \cite[p.~50]{Per90}, this would require close inspection of many points on the curve. Hence, a precise formulation of such a general formula is not within the scope of this article.

\medskip
\noindent This article has the following structure. In \cref{sec:2hessian} we fix Cayley's polynomial for the 2-Hessian to the one given in \cref{thm:2hessian}, and we explore some curves and their sextactic points using this tool.

In \cref{sec:sextactic} we prove the formula from \cref{thm:SPF} for the number of sextactic points on cuspidal curves, and we apply this formula to examples. Moreover, we derive a corollary which ties the sextactic point formula to the $2$-Hessian. 

In \cref{sec:rational} we take a closer look at rational curves. In this case, the osculating conic to a curve at a smooth point can be calculated directly from the parametrization. Moreover, we show that the zeros of the Wronski determinant of the 2nd Veronese embedding of the curve correspond to its $2$-Weierstrass points.

The figures in this article are made with \texttt{GeoGebra} \cite{GG}, computations are done with \texttt{Maple} \cite{Maple}, and program code can be found in \cite[Appendix B]{Mau17}.

\section{The $2$-Hessian curve}\label{sec:2hessian}
A fair bit of notation is necessary to present the defining polynomial of the $2$-Hessian to a curve. For the convenience of the reader, we stay close to Cayley's original formulations from \cite{Cay59,Cay65} and begin this section by recalling the essential objects.

With $C$ and $F$ as before, and $p$ a point on the curve, let 
\begin{align*}
DF_p(x,y,z) & = x F_x(p) + y F_y(p) + z F_z(p), \\
D^2F_p(x,y,z) & = x^2 F_{xx}(p) + y^2 F_{yy}(p) + z^2 F_{zz}(p) \\ & \quad + 2xy F_{xy}(p) + 2xz F_{xz}(p) + 2yz F_{yz}(p).
\end{align*} 

Moreover, with $\Hess(F)$ denoting the Hessian matrix of $F$, write  
\[
\Hess(F)=\begin{bmatrix}
a & h & g \\
h & b & f \\
g &  f &  c \\
\end{bmatrix},
\]
where as usual 
\[
a = F_{xx},\; b = F_{yy},\; c = F_{zz},\; f = F_{yz},\; g = F_{xz},\; h = F_{xy}, 
\] and $H=\det \Hess(F)$.

Similarly, for the Hessian matrix of $H$, write
\[
\Hess(H)=\begin{bmatrix}
a' & h' & g' \\
h' & b' & f' \\
g' &  f' &  c' \\
\end{bmatrix},
\]
so that the elements denote the second order partial derivatives of $H$,
\[
a' = H_{xx},\; b' = H_{yy},\; c' = H_{zz},\; f' = H_{yz},\; g' = H_{xz},\; h' = H_{xy}. 
\]

Following the pattern, the elements of the adjoint matrix $\Hess(F)^{\adj}$ are assigned the notation
\[
\Hess(F)^{\adj}=\begin{bmatrix}
\cA & \cH & \cG \\
\cH & \cB & \cF \\
\cG &  \cF &  \cC \\
\end{bmatrix},
\]
where
\[
\begin{array}{c}
\cA  = bc-f^2,\quad \cB = ac-g^2,\quad \cC = ab - h^2, \\ \cF = hg-af,\quad \cG= hf-bg,\quad \cH = fg-hc.
\end{array}
\]

Furthermore, put
\begin{align*}
\Omega & = (\cA,\cB,\cC,\cF,\cG,\cH) \cdot (a',b',c',2f',2g',2h'),
\end{align*}
which also can be expressed as the trace of the product of the adjoint matrix of the Hessian matrix of $F$ and the Hessian matrix of $H$: \[\Omega=\trace \left(\Hess(F)^{\adj} \cdot \Hess(H)\right).\] 

Additionally, let
\begin{align*}
\di_x\Omega_{\bar{H}} & = (\cA_x,\cB_x,\cC_x,\cF_x,\cG_x,\cH_x)\cdot(a',b',c',2f',2g',2h'), \\
\di_x\Omega_{\bar{F}}  & = (\cA,\cB,\cC,\cF,\cG,\cH)\cdot(a'_x,b'_x,c'_x,2f'_x,2g'_x,2h'_x).
\end{align*}
 Observe that the partial derivative of $\Omega$ with respect to $x$ can be written \[\Omega_x = \di_x\Omega_{\bar{H}} +\di_x\Omega_{\bar{F}},\] and note that none of the terms on the right hand side is an actual derivative. Naturally, similar expressions can be found for $\Omega_y$ and $\Omega_z$ by replacing $x$ with $y$ and $z$ in the above. Thus, although they are not true Jacobi determinants, we use Cayley's notation and write \[\Jac(F,H,\Omega_{\bar{H}})=\begin{vmatrix}
 F_x & F_y & F_z \\
 H_x & H_y & H_z \\
 \di_x \Omega_{\bar{H}} &  \di_y \Omega_{\bar{H}} &  \di_z \Omega_{\bar{H}} \\
 \end{vmatrix}\;\] and \[\Jac(F,H,\Omega_{\bar{F}})=\begin{vmatrix}
 F_x & F_y & F_z \\
 H_x & H_y & H_z \\
 \di_x \Omega_{\bar{F}} &  \di_y \Omega_{\bar{F}} &  \di_z \Omega_{\bar{F}} \\
 \end{vmatrix}.\]
 
Moreover, put
\[
\Psi  = (\cA,\cB,\cC,\cF,\cG,\cH) \cdot (H_x^2,H_y^2,H_z^2,2H_{y}H_{z},2H_{x}H_{z},2H_{x}H_{y}),
\]
or equivalently, in matrix form,
\begin{align*}
\Psi & = - 
\begin{vmatrix}
0   & H_x & H_y & H_z \\
H_x &   a   &   h   &   g   \\
H_y &   h   &   b   &   f   \\
H_z &   g   &   f   &   c
\end{vmatrix}.
\end{align*} Lastly, this time an actual Jacobi determinant, let $\Jac(F,H,\Psi)$ denote the determinant
\begin{align*}
\Jac(F,H,\Psi) = \begin{vmatrix}
F_x & F_y & F_z \\
H_x & H_y & H_z \\
\Psi_x & \Psi_y & \Psi_z \\
\end{vmatrix}.
\end{align*}    

\subsection{The osculating conic} % (fold)
\label{sub:the_OC}
For completion, before we move on to the $2$-Hessian, we present the osculating conic to a curve at a smooth point, give a formal definition of a sextactic point, and present an example.

For any smooth point $p$ on a curve $C$ of degree $d \geq 3$ there exists a unique osculating curve of degree 2. In the case of inflection points, this curve is the double tangent line. For a smooth point that is not an inflection point, the osculating conic was first presented by Cayley in \cite{Cay59}.

\begin{theorem}[\protect{\cite[p.~377]{Cay59}}]
	\label{thm:osccon}
	Let $C$ be a plane curve of degree $d \geq 3$ given by a polynomial $F$. If $p$ is a point on $C$ that is neither singular nor an inflection point, then, with $9H^3\Lambda = -3 \Omega H + 4 \Psi$, the osculating conic $O_p$ to $C$ at $p$ is given by
	\begin{align*}
	D^2F_p - \left(\tfrac{2}{3} \frac{1}{H(p)}{DH}_p + \Lambda(p) DF_p\right)DF_p=0.
	\end{align*}

\end{theorem}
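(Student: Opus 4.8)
The plan is to verify \cref{thm:osccon} by a direct local computation at the smooth, non-inflectionary point $p$, exploiting projective invariance to reduce to a convenient normal form. First I would choose coordinates so that $p = [0:0:1]$ and the tangent line $T_p$ is $\{y=0\}$; dehomogenizing by setting $z=1$, the curve near $p$ is the graph of a function $y = \tfrac12 \kappa_2 x^2 + \tfrac16 \kappa_3 x^3 + \tfrac{1}{24}\kappa_4 x^4 + O(x^5)$, where $p$ not being an inflection point forces $\kappa_2 \neq 0$. The osculating conic is characterized as the unique conic $Q$ with $(Q \cdot C)_p \geq 5$; parametrizing the pencil of conics through $p$ tangent to $T_p$ and matching Taylor coefficients up to order $4$ gives an explicit (and essentially trivial) formula for $O_p$ in terms of $\kappa_2, \kappa_3, \kappa_4$. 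The substance of the proof is then to show that the right-hand side of the displayed equation in \cref{thm:osccon}, evaluated in these coordinates, produces exactly that conic.

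The key intermediate step is to express the classical invariants $H$, $\Omega$, $\Psi$ (and the linear forms $DF_p$, $DH_p$, the quadratic form $D^2F_p$) in terms of $\kappa_2,\kappa_3,\kappa_4$ and the degree $d$ at the point $p$. Here I would use the homogeneity relations systematically: Euler's identity $xF_x + yF_y + zF_z = dF$ and its derivatives let one rewrite all partials at $p$ in terms of the low-order Taylor data of the affine graph, and because everything is being evaluated at the single point $p$ one only needs the $2$-jet of $F$, the $2$-jet of $H$, etc. Concretely I would (i) compute $\Hess(F)$ at $p$ and hence $H(p)$, finding $H(p) = -(d-2)^2 \kappa_2^2 \neq 0$ up to normalization of $F$; (ii) compute $DF_p$, $D^2F_p$, recovering the leading "$D^2F_p - (\text{linear})\cdot DF_p$" as the obvious osculating-to-order-$3$ conic; (iii) compute the adjoint $\Hess(F)^{\adj}$, the Hessian of $H$, and thereby $\Omega(p)$ and $\Psi(p)$, and finally $\Lambda(p)$ via $9H^3\Lambda = -3\Omega H + 4\Psi$; (iv) substitute into the displayed formula and check it matches the conic from the pencil computation, in particular that the order-$4$ coefficient is killed.

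The main obstacle is bookkeeping rather than conceptual: $\Psi$ involves a bordered $4\times 4$ determinant built from second derivatives of $H$, so one needs the $2$-jet of $H$ at $p$, which in turn requires derivatives of $F$ up to order $4$ — exactly the order at which $\kappa_4$ enters, which is what makes the formula detect $5$-point contact. I expect the delicate point to be tracking the numerical constants ($\tfrac23$, the $-3$ and $4$ in $9H^3\Lambda$, the powers of $d-2$) correctly through the adjoint and the bordered determinant; a useful internal check is that the whole expression must be independent of the chosen scaling of $F$ and of the remaining coordinate freedom (rotations fixing $p$ and $T_p$, and the choice of $z$-scale), which pins down the homogeneity weights and catches arithmetic slips. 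A secondary point worth a remark is the degenerate case where $p$ is an inflection point: then $\kappa_2 = 0$, $H(p)=0$, the formula as written is singular, and one recovers instead the double tangent line $DF_p^2 = 0$ as the limiting osculating conic — this is the content of the sentence preceding the theorem and should be noted but is not part of the main computation.
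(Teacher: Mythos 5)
First, note that the paper does not prove \cref{thm:osccon} at all: it is quoted from Cayley's 1859 memoir, so there is no internal proof to compare against, and your verification-by-local-computation would in principle be a welcome addition. However, as sketched it has a genuine gap. The pivotal claim — that Euler's identity lets you ``rewrite all partials at $p$ in terms of the low-order Taylor data of the affine graph'', so that $H(p)$, $DH_p$, $\Omega(p)$, $\Psi(p)$ become functions of $\kappa_2,\kappa_3,\kappa_4$ and $d$ alone — is false. Homogeneity only eliminates the $z$-derivatives; it says nothing about the transverse part of the jet of $F$ at $p$. With $p=(0:0:1)$ and tangent $y=0$, the $4$-jet of the dehomogenization $f(x,y)=F(x,y,1)$ carries, besides $F_y(p)$ and the three combinations equivalent to $\kappa_2,\kappa_3,\kappa_4$, the free coefficients $F_{yy}(p)$, $F_{xyy}(p)$, $F_{yyy}(p)$ and four more at order $4$, none of which is determined by the curve germ. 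These do enter the individual ingredients of the formula: already $H_x(p)$ and $H_y(p)$ involve $F_{yy}(p)$, and $\Hess(H)(p)$, hence $\Omega(p)$ and $\Psi(p)$, involve the full $4$-jet. That the displayed combination $D^2F_p-\bigl(\tfrac23 H(p)^{-1}DH_p+\Lambda(p)DF_p\bigr)DF_p$ is independent of these transverse coefficients is precisely (a large part of) what has to be proved; it cannot be assumed, since it is equivalent to the statement that the formula depends only on the germ of $C$ at $p$. So your step (i)--(iii) must be redone carrying all $4$-jet coefficients of $F$ (with the $z$-derivatives eliminated via Euler, producing the $d$-dependence) and the contact-order-$\ge 5$ condition checked as an identity in all of these parameters; the cancellation of the transverse ones is the crux, not bookkeeping. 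A symptom of the same issue is your trial value $H(p)=-(d-2)^2\kappa_2^2$: the correct evaluation in your normalization is $H(p)=-(d-1)^2F_y(p)^2F_{xx}(p)$, which is linear in $\kappa_2$ and carries the scale $F_y(p)^3$, so the proposed shortcut does not even reproduce the easiest covariant correctly.

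Two smaller points. Reducing to the normal form $p=(0:0:1)$, $T_p=\{y=0\}$ requires knowing that the displayed expression is a projective covariant (each of $H$, $\Omega$, $\Psi$, $DF_p$, $DH_p$, $D^2F_p$ transforms with a definite weight, and the weights match because $\Lambda$ is defined through $9H^3\Lambda=-3\Omega H+4\Psi$); this is classical but should be stated, since the whole strategy of checking at one normalized point rests on it. Also, uniqueness of the conic with $(O_p\cdot C)_p\ge 5$ at a non-inflectional smooth point should be cited or proved (it is what lets you conclude from ``contact $\ge 5$'' that the formula gives \emph{the} osculating conic); your pencil-matching argument gives existence, but uniqueness is needed to identify it with Cayley's expression.
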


\begin{definition}
	\label{def:sextactic_pt}
	On a plane curve $C$, a smooth point $p$ that is not an inflection point is called a \emph{sextactic point} if
	\[c_p=(O_p \cdot C)_p \geq 6.\] 
	With $s_p = c_p - 5$, a sextactic point $p$ is said to be of order $s_p$, or \emph{$s_p$-sextactic}.
\end{definition}

A first example of a sextactic point on a curve can be found on a cubic. 
\begin{example} \label{ex:nodalcubic}
	Let $C$ be the cubic curve with a nodal singularity given by \[F=y^2z-x^3-x^2z.\] 

Pick a smooth point on $C$, say $p_1=(\tfrac{-4}{5} : \tfrac{4}{5\sqrt{5}} : 1)$. The osculating conic $O_{p_1}$, for which $(O_{p_1} \cdot C)_{p_1}=5$, is shown in \vref{fig:osccon}. Its defining polynomial can be directly computed with the formula in \cref{thm:osccon} and \cite[Program B.2, pp.~67--68]{Mau17}, \[1125x^2+625y^2+64z^2+400\sqrt{5}yz+1200xz+350\sqrt{5}xy=0.\]
	
Now, pick $p_2=(-1:0:1)$. The osculating conic $O_{p_2}$ can be computed as above, \[2x^2+y^2+z^2+3xz=0.\] Note that $(O_{p_2} \cdot C)_{p_2}=6$, hence $p_2$ is a sextactic point and $O_{p_2}$ a hyperosculating conic, see \vref{fig:sextactic}.
	
	\begin{figure}[ht]
		  \centering
			\includegraphics[width=8cm]{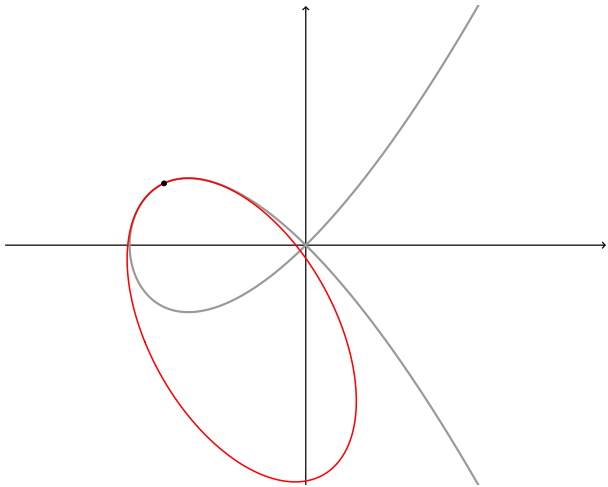}
		\caption{The nodal cubic from \cref{ex:nodalcubic} with the osculating conic $O_{p_1}$ at the smooth point $p_1$.}
		\label{fig:osccon}
\end{figure}	
\begin{figure}[ht]
		\centering
			\includegraphics[width=8cm]{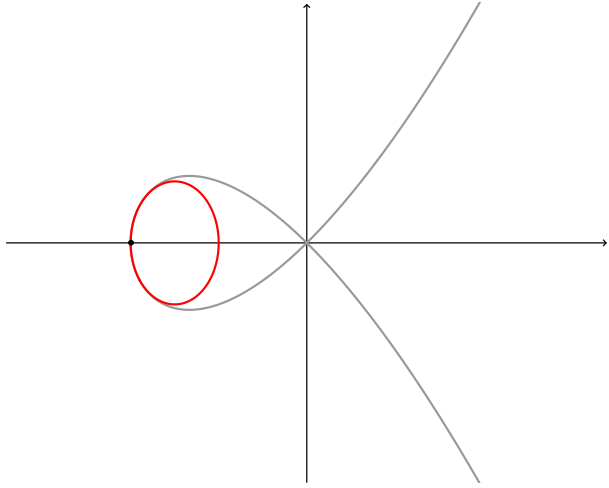}
		\caption{The nodal cubic from \cref{ex:nodalcubic} with the hyperosculating conic $O_{p_2}$ at the sextactic point $p_2$.}
			\label{fig:sextactic}
	\end{figure}
\end{example}

\begin{remark}
For a thorough investigation of a nodal cubic with respect to its sextactic points using elementary methods, we refer to the Appendix by Sakai in \cite[pp.~383--385]{Ton12} by Tono. Perhaps somewhat surprisingly, a sextactic point on such a curve plays an important role in the construction of a particularly interesting series of unicuspidal curves, first studied by Orevkov in \cite{Ore02}.
\end{remark}

\begin{remark}
From the perspective of abelian groups on curves, in \cref{ex:nodalcubic} it can be observed directly that $p_2=(-1:0:1)$ is a sextactic point. Indeed, the curve is given on Weierstrass form, and the vertical line $x+z=0$ is the tangent line to $C$ at $p_2$, passing through the identity element and inflection point $(0:1:0)$. Thus, $p_2$ is a $2$-torsion point for the abelian group structure on the smooth part of $C$, and it follows that $p_2$ is a sextactic point. 
\end{remark}

\subsection{The correct 2-Hessian}
An incorrect defining polynomial of the 2-Hessian to a curve $C$ is given by Cayley in \cite[p.~556]{Cay65}:
\begin{align*}
& (12d^2-54d+57)H \Jac(F,H,\Omega_{\bar H})\\
& + (d-2)(12d-27)H \Jac(F,H,\Omega_{\bar F}) \\
& - \mathbf{40}(d-2)^2 \Jac(F,H,\Psi).
\end{align*}
Cayley's proof of this result is based on restrictions that arise when more than five points in the intersection between a conic and the curve $C$ coalesce, and it is mostly correct. There is, however, an elementary computational error in the proof that affects the coefficient of the last term and makes the polynomial invalid. See \vref{thm:2hessian} for the correct defining polynomial of the $2$-Hessian to a curve. 

\begin{remark}
The mistake is neither corrected in the version of the paper \cite{Cay65} published in \emph{The collected mathematical papers} of Arthur Cayley \cite[p.~221]{Cay92}, nor in later works that cite the result, see \cite[p.~226]{Cuk97} and \cite[p.~372]{Sal79}. 
\end{remark}

\begin{proof}[Correction of Cayley's proof of \cref{thm:2hessian}]
	Cayley's mistake occurs in Section 19 on p.~553 in \cite{Cay65} as he attempts to simplify an expression that he has obtained for the $2$-Hessian in Section 10 on p.~550. 
	
	In the simplification Cayley introduces, in Section 18 on pp.~552--553, an expression $W$ and correctly states that 
	\begin{align*}
	W \coloneqq H\di\Omega - 5\Omega\di H = \frac{-3}{4d-9}\vartheta\Jac(F,\Omega,H) - \frac{5d-9}{4d-9}\di(\Omega H),
	\end{align*}
	where $\di= (F_y\nu -F_z\mu)\di_x + (F_z\lambda -F_x\nu)\di_y + (F_x\mu-F_y\lambda)\di_z$ and $\vartheta = \lambda x+\mu y+\nu z$, with $\lambda,\mu$, and $\nu$ arbitrary constants.

	Moreover, in Section 19, Cayley states that 
	\begin{align*}
\Psi \di H = \frac{{\bf{\frac{1}{2}}}}{4d-9} \vartheta \Jac(F,\Psi,H) + \frac{\frac{3}{2}(d-2)}{4d-9}H\di \Psi,
	\end{align*}
	and subsequently calculates 
	\begin{align}
	\label{eq3:2Hes-3}
	\begin{split}
	9HW + {\bf{40}}\Psi\di H = & -\frac{9(5d-9)}{4d-9}H\di(\Omega H) + \frac{60(d-2)}{4d-9}H\di \Psi \\ 
	& + \frac{\vartheta}{4d-9}\left[-27H\Jac(F,\Omega,H) + {\bf{40}} \Jac(F,\Psi, H)\right],
	\end{split}
	\end{align}
	where he makes the mistake of forgetting to multiply 40 by the $\frac{1}{2}$ in the first term of $\Psi\di H$. 

The correct calculations yield
	\begin{align}
	\label{eq3:2Hes-4}
	\begin{split}
	9HW+{\bf{40}}\Psi\di H = & -\frac{9(5d-9)}{4d-9}H\di(\Omega H) + \frac{60(d-2)}{4d-9}H\di \Psi \\ & + \frac{\vartheta}{4d-9}\left[-27H\Jac(F,\Omega,H) + {\bf{20}} \Jac(F,\Psi, H)\right],
	\end{split}
	\end{align}
	where the coefficient of $\Jac(F,\Psi,H)$ in the parenthesis is $20$ as opposed to $40$ in Equation \eqref{eq3:2Hes-3}.
	
	Following Cayley's remaining calculations in Sections 20--25 on pp.~553--556 in \cite{Cay65}, using the expression in \cref{eq3:2Hes-4}, leads to the polynomial in \cref{thm:2hessian}.
\end{proof}

\begin{remark}	
	Note that Cayley's result is stated for smooth curves in \cite{Cay65}. However, the proof is based on local considerations, hence the corrected polynomial identifies the higher order inflection points and sextactic points on singular curves as well. Moreover, each term in the polynomial $H_2(F)$ involves a determinant with the partial derivatives of $F$ in one row, so the $2$-Hessian certainly contains the singular points of $C$.  
\end{remark}

\begin{example} \label{ex:QuarticC1B}
	Let $C$ be the curve given by the defining polynomial \[F = x^4-x^3y+y^3z.\] This curve has one cusp with multiplicity sequence $\overline{m}_p=[3]$ and two simple inflection points. The defining polynomial for the 2-Hessian, computed with \cite[Program B.3, p.~69]{Mau17}, is  
	\begin{align*}
		-2^7\cdot3^{11}\cdot5\cdot7\cdot y^{18}\cdot\left(4x-y\right)\cdot \left(14x^2-7xy+2y^2\right)=0.
	\end{align*}
	
 The intersection points of $H_2$ and $C$ are
	\begin{align*}
	p_1 &= (0:0:1), \\
	p_2 &= \left(\tfrac{64}{3}:\tfrac{256}{3}:1\right), \\ 
	p_3 &= \left(\tfrac{49}{24}+i\tfrac{77\sqrt{7}}{24}:\tfrac{-637}{48}+i\tfrac{343\sqrt{7}}{48}:1\right), \\ 
	p_4 &= \left(\tfrac{49}{24}-i\tfrac{77\sqrt{7}}{24}:\tfrac{-637}{48}-i\tfrac{343\sqrt{7}}{48}:1\right).
	\end{align*}
	The point $p_1$ is the cusp, while $p_2$, $p_3$ and $p_4$ are sextactic points. With \cite[Program B.2, pp.~67--68]{Mau17} we compute the osculating conics for the latter and check with \texttt{Maple} \cite{Maple} that \[(O_{p_2} \cdot C)_{p_2}=(O_{p_3} \cdot C)_{p_3} = (O_{p_4}\cdot C)_{p_4} = 6.\] Note that Cayley's original formula for the $2$-Hessian identifies $p_2$ as a sextactic point, but not $p_3$ and $p_4$.
	
	A complete overview of this curve in terms of singularities, inflection points and sextactic points, and intersections with associated curves, can be found in \vref{tab:QuarticC1B}.
	
		\begin{table}[ht]
		\centering
		\resizebox{\textwidth}{!}{
			\begin{tabular}{ccccccc}
				%\toprule
			
			Point $p$ & $\overline{m}_p$ & $\delta_p$ & \((T_p \cdot C)_p\) & \((O_p \cdot C)_p\) & \((H \cdot C)_p\) & \((H_2 \cdot C)_p\)\\
				\midrule
				% singular point
				$(0:0:1)$  
				& [3] & 3 & 4 & - & 22 & 81\\[1ex]
				
				% flex point
				$(8:16:1)$ 
				&  & 0 & 3 & - & 1  & 0 \\[1ex]
				
				% flex point
				$(0:1:0)$  
				&  & 0 & 3 & - & 1  & 0 \\[1ex]
				
				% sextactic point
				$(\frac{64}{3}:\frac{256}{3}:1)$ 
				&  & 0 & 2 & 6 & 0 & 1\\[1ex]
				
				% sextactic point
				$(\frac{49}{24}+i\frac{77\sqrt{7}}{24}:\frac{-637}{48}+i\frac{343\sqrt{7}}{48}:1)$ 
				&  & 0 & 2 & 6 & 0 & 1\\[1ex]
				
				% sextactic point
				$(\frac{49}{24}-i\frac{77\sqrt{7}}{24}:\frac{-637}{48}-i\frac{343\sqrt{7}}{48}:1)$ 
				&  & 0 & 2 & 6 & 0 & 1\\[1ex]
				
				%\bottomrule
			\end{tabular}
		}
		\caption{Invariants and intersections for the rational cuspidal quartic in \cref{ex:QuarticC1B}.}
		\label{tab:QuarticC1B}
	\end{table}
\end{example}

\section{Sextactic point formulas}\label{sec:sextactic}
In this section we prove the formula for the number of sextactic points on a cuspidal curve in \vref{thm:SPF}, and we apply it to examples. Moreover, we state a corollary to the formula that reflects the intersection of $C$ with its Hessian and $2$-Hessian.

\subsection{Proof of the sextactic point formula}\label{pf:SPF}
The key ingredient in our proof is a generalized Pl\"ucker formula by Ballico and Gatto in \cite{BG97}. To each $Q$-Weierstrass point $p$ on a curve $C$ it is possible to assign a so-called $Q$-Weierstrass weight $w_p(Q)$. On the other hand, the sum of the Weierstrass weights can be computed through a generalization of the Brill--Segre formula. In our situation the result can be stated as follows.
\begin{proposition}[\protect{\cite[Proposition~3.4, p.~153]{BG97}}]
	\label{prop:BG34}
	Let $C$ be a projective, irreducible, cuspidal curve of geometric genus $g$, let $Q$ be a complete linear system of degree $\deg Q$ and dimension $r$, and let $w_p(Q)$ denote the Weierstrass weight of $C$ at $p$ with respect to $Q$. Then
	\begin{align*}
	\sum_{p\in C} w_p(Q) = (r+1)(\deg Q + rg - r).
%	%E_{r,reg}^V = r\left(d + (g - \delta - 1)(r-1)\right) - E_{r,sing}^V.
	\end{align*}
\end{proposition}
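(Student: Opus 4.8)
The plan is to transport the statement to the normalization and then invoke the classical Brill--Segre (Pl\"ucker) formula for a linear series on a smooth curve. Let $\nu\colon \tilde C \to C$ be the normalization. Since $C$ is cuspidal, every singular point is unibranch, so $\nu$ is a bijection on closed points and $\tilde C$ is a smooth projective curve whose genus is the geometric genus $g$ of $C$. Writing $Q = \P(V)$ with $V \subseteq H^{0}(C,\mathcal L)$ an $(r+1)$-dimensional space of sections of a line bundle $\mathcal L$ of degree $\deg Q$, I would first check that pullback induces an injection $V \hookrightarrow H^{0}(\tilde C,\nu^{*}\mathcal L)$, so that $\tilde Q \coloneqq \nu^{*}Q$ is a linear series on $\tilde C$ of the same dimension $r$ and, because $\nu$ is birational, of the same degree $\deg\nu^{*}\mathcal L = \deg Q$. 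This $\tilde Q$ need not be complete on $\tilde C$, but only its dimension and degree enter the formula below.

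The next step is the identification of local weights: for $p \in C$ with preimage $\tilde p = \nu^{-1}(p)$, I want the weight $w_p(Q)$ of \cite{BG97} to agree with the ordinary ramification weight $\sum_{i=0}^{r}(\tilde a_i - i)$ of $\tilde Q$ at $\tilde p$ on the smooth curve $\tilde C$, where $\tilde a_0 < \dots < \tilde a_r$ is the vanishing sequence of $\tilde Q$ at $\tilde p$. At a smooth point $\nu$ is a local isomorphism, so this is immediate. At a cusp it is precisely what the definition of the Weierstrass weight in \cite{BG97} is built to achieve: the order sequence of $Q$ at $p$ is read off from the intersection multiplicities $(\Gamma\cdot C)_p$ for $\Gamma\in Q$, and these are computed by pulling back to $\tilde C$, so they coincide with the vanishing orders of $\tilde Q$ at $\tilde p$; in particular the extra contribution forced by the non-ordinary parametrization of the branch is already absorbed into $w_p(Q)$, with no separate correction term.

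It then remains to apply the classical formula on $\tilde C$. With respect to a local parameter and a basis $f_0,\dots,f_r$ of $\tilde Q$, the Wronskian $W(f_0,\dots,f_r)$ is a nonzero global section, well defined up to a nonzero scalar, of $(\nu^{*}\mathcal L)^{\otimes(r+1)}\otimes\omega_{\tilde C}^{\otimes r(r+1)/2}$, and its order of vanishing at $\tilde p$ equals $\sum_{i=0}^{r}(\tilde a_i - i)$. Counting the zeros of $W$ and using the bijection $\nu$ together with the previous paragraph then yields
\begin{align*}
\sum_{p\in C} w_p(Q) = \sum_{\tilde p\in\tilde C} w_{\tilde p}(\tilde Q)
&= (r+1)\deg Q + r(r+1)(g-1)\\
&= (r+1)(\deg Q + rg - r).
\end{align*}

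I expect the genuinely delicate point to be the weight identification in the second step: one must unwind the definition of $w_p(Q)$ at a cusp in \cite{BG97} and confirm that it is computed through the normalization rather than through some finer structure intrinsic to $C$, so that passing from $C$ to $\tilde C$ is weight-preserving point by point. This is exactly where the cuspidal (unibranch) hypothesis enters; for a multibranch singularity a single point of $C$ has several preimages in $\tilde C$, and the correspondence must be set up one branch at a time --- the mechanism behind the extension alluded to after \cref{thm:SPF}.
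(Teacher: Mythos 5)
This proposition is not proved in the paper at all: it is imported verbatim from Ballico--Gatto \cite[Proposition~3.4]{BG97} and used as a black box, so there is no internal proof to compare your argument against. That said, your sketch is a sound route to the statement and is the one implicitly underlying the paper's use of it. Because $C$ is cuspidal, the normalization $\nu\colon\tilde C\to C$ is bijective on points, pullback of sections is injective and degree-preserving, and the classical Brill--Segre/Wronskian count on the smooth model of genus $g$ gives $(r+1)\deg Q+r(r+1)(g-1)=(r+1)(\deg Q+rg-r)$; the appearance of the \emph{geometric} genus in the formula is itself the signal that the weights of \cite{BG97} are the ramification weights of the pulled-back series, not a count built from jets on $C$ (which would produce the arithmetic genus instead). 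The one genuinely load-bearing step, which you correctly isolate, is the identification $w_p(Q)=\sum_{i=0}^r(\tilde a_i-i)$ at a cusp; this is consistent with how the paper actually computes the weights, namely via \cref{eq:wp-used} and the Puiseux parametrization in \cref{lem:oscint}, i.e.\ exactly through the branch on the normalization, so your reduction is weight-preserving point by point in the sense the paper needs. Two minor remarks: completeness of $Q$ plays no role in your count (only $\deg Q$ and $r$ enter, and the pulled-back series need not be complete, as you note), and you should say explicitly that characteristic zero guarantees the Wronskian of a basis of $\tilde Q$ is not identically zero, so that its zero divisor really has degree $(r+1)\deg Q+\tfrac{r(r+1)}{2}(2g-2)$. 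With those points made precise, your argument stands as an independent proof of the cuspidal case of \cite{BG97}, whereas Ballico--Gatto's own treatment is designed to handle more general singular curves.
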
 

In the case of plane curves, to compute the $n$-Weierstrass weight $w_p(n)$ of a point $p \in C$ with respect to a complete linear system of curves of degree $n$ and dimension $r$, we use a technique by Notari from \cite[pp.~24--26]{Not99}. Assuming that $C$ is cuspidal, Notari's algorithm reduces to, for each point $p\in C$, finding curves $C_0,\ldots,C_r$ of degree $n$ such that the intersection multiplicities at $p$ are distinct. Subsequently, with $h_i = (C \cdot C_i)_p$, the $n$-Weierstrass weight of a unibranched point $p$ can be expressed as  
\begin{equation}
\label{eq:wp-used}
w_p(n) = \sum_{i=0}^r (h_i - i).
\end{equation}
Note that the integers $h_i$ do not constitute an ordered sequence.

The last important ingredient in the proof of \cref{thm:SPF} is \cref{lem:oscint}, which uses the Puiseux parametrization of $C$ at $p$ to determine $h_i$. 

\begin{remark}
In the remainder of this article we omit the index $p$ for the local invariants when only one point is discussed.
\end{remark}

\begin{lemma}\label{lem:oscint}
Let $p$ be a smooth point or a cusp of multiplicity $m$ on a plane curve $C$ of degree $d \geq 3$. With $T_p$ the tangent line at $p$, let $l=(T_p \cdot C)_p$. If $l\neq2m$, then a curve of degree $2$ intersects $C$ at $p$ with one of the following intersection multiplicities: 
\begin{align*}
h_0 = 0,\; h_1 = m,\; h_2 = l,\; h_3 = 2m,\; h_4 = m+l, \; h_5 = 2l.
\end{align*} 

If $l =2m$, then a curve of degree $2$ intersects $C$ at $p$ with one of the following intersection multiplicities:
\begin{align*}
h_0 = 0,\; h_1 = m,\; h_2 = 2m,\; h_3 = 3m,\; h_4 = 4m,\; h_5=c,
\end{align*}
where $c$ is subject to the restrictions \[ c > 2m \text{ and } c \neq 3m,4m.\] 
In particular, a curve of degree $2$ that intersects $C$ at $p$ with intersection multiplicity $c$ is irreducible. If $p$ is a cusp, such a curve is not necessarily unique, but $c$ is uniquely determined. 
\begin{proof}
Since $p$ is unibranched, the Puiseux parametrization of $C$ at $p$ can be given by \begin{equation*}
\label{eq:puiseux_param}
(t^m : at^l +\cdots : 1),
\end{equation*} where $a\neq0$, and ``$\cdots$'' denotes higher order terms in $t$ \cite[Cor.~7.7, p.~135]{Fis01}. 

\smallskip \noindent \textit{The case $l\neq2m$:}
We choose the standard basis for plane curves of degree 2, i.e.
	\[
		x^2,\; y^2,\; z^2,\; yz,\; xz,\; xy,
	\]
	and substitute the Puiseux parametrization of $C$ at $p$ into this basis. This gives
\[
x^2=t^{2m},\quad
y^2=a^2t^{2l} + \cdots,\quad
z^2= 1,\]\[
yz= at^l + \cdots,\quad
xz= t^m,\quad
xy= at^{m+l} + \cdots.
\]
	By assumption $l \neq 2m$, hence the basis elements represent curves with distinct intersection multiplicities at $p$, and taking the order of $t$ in each element provides the desired values. No other order is possible to obtain by any linear combination of the basis elements. 
	
	\smallskip \noindent \textit{The case $l=2m$:} 
	 Observe that since $d \geq 3$, the Puiseux parametrization has the form \[(t^m:at^{2m}+a_bt^{b}+\cdots:1)\] for some non-zero constants $a$ and $a_b$, with $b > 2m$, and we may assume that there are no terms of order between $2m$ and $b$ in the $y$-coordinate. Then
	\[
	x^2=t^{2m},\quad
	y^2= a^2t^{4m} + 2a\cdot a_b t^{2m+b}+ a_b^2t^{2b}+\cdots,\quad
	z^2=1,\]\[
	yz=at^{2m} + a_bt^b + \cdots,\quad
	xz= t^m,\quad
	xy= at^{3m} + a_bt^{b+m}+ \cdots.
	\]

Since $l = 2m$, two of the orders of $t$ in the basis elements are equal to $2m$, and \[h_0=0,\; h_1=m,\; h_2=2m,\; h_3=3m,\; h_4=4m.\] The remaining intersection multiplicity, $h_5=c$, can be found by computing the order of $t$ in all possible linear combinations of the basis elements. There are three main cases to consider. 

First, assume that $b \neq 3m,4m$. If $4m<b$, then \[yz-ax^2=0\] is the unique conic with order $c=b$ in $t$. If $3m<b<4m$, then any member of the family of conics \[yz-ax^2+k_1y^2=0,\] with $k_1 \in \C$, has order $c=b$ in $t$. If $2m<b<3m$, then any member of the family of conics \[yz-ax^2+k_1y^2+k_2xy=0,\] with $k_1,k_2 \in \C$, has order $c=b$ in $t$.

For the remaining two cases, $b=4m$ and $b=3m$, we need to ensure the existence of $c$. Indeed, if $b=km$ for some $k$, then since $p$ is unibranched, there exists a term with non-zero coefficient in the $y$-coordinate of the Puiseux parametrization such that the order of $t$ in this term is bigger than $b$, and $m$ is not a factor of this order. Thus, a linear combination of the basis elements will provide a curve of degree $2$ that intersects $C$ at $p$ with intersection multiplicity $c$ for some $c>b>2m$ and $c \neq 3m,4m$.

Thus, secondly, if $b=4m$, then \[yz-ax^2-\frac{a_b}{a^2}y^2=0\] is the unique conic with order $c$ in $t$ for a $c > b=4m$.

Third, if $b=3m$ and $c < 4m$, then any member of the family \[yz-ax^2-\frac{a_b}{a}xy+k_1y^2=0,\] with $k_1 \in \C$, has order $c$ in $t$. If $4m<c$, then \[yz-ax^2-\frac{a_b}{a}xy+\frac{a_b^2}{a^3}y^2=0,\] is the unique conic with order $c$ in $t$. 
%Similarly, and third, if $b=3m$, then \[yz-ax^2-\frac{a_b}{a}xy+\frac{a_b^2}{a^3}y^2=0\] is the unique conic that has order $c$ in $t$ for a $c > 4m$.  

In each of the above cases, the curves are irreducible and $c$ is uniquely determined, with $c>2m$ and $c \neq 3m,4m$.
\end{proof}
\end{lemma}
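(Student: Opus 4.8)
The plan is to read everything off a Puiseux parametrization of the branch at $p$. Since $p$ is unibranched, by \cite[Cor.~7.7, p.~135]{Fis01} we may choose affine coordinates so that $C$ is locally parametrized by $(t^m : at^l + \cdots : 1)$ with $a \neq 0$, the line $V(y)$ being the tangent $T_p$ and $l = (T_p \cdot C)_p$. A conic is, up to a scalar, a linear combination of the six monomials $x^2, y^2, z^2, yz, xz, xy$, and the intersection multiplicity of a conic with $C$ at $p$ is just the $t$-adic order of the power series obtained by substituting the parametrization into the corresponding combination. So the whole lemma reduces to deciding which orders occur.

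First I would dispose of the case $l \neq 2m$. Substituting the parametrization into the six monomials gives power series of orders $2m, 2l, 0, l, m, m+l$, and since $0 < m < l$ and $l \neq 2m$ these six values are pairwise distinct; as the order of a finite sum of power series of distinct orders is the minimum of those orders, every linear combination has order in $\{0, m, l, 2m, m+l, 2l\}$, and each value is realised by a single monomial. This yields the first list of $h_i$.

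The real work is the case $l = 2m$, where $x^2$ and $yz$ both contribute order $2m$, so cancellation is possible and the sixth intersection multiplicity is no longer visible on a single monomial. Here, using $d \geq 3$, I would sharpen the parametrization to $(t^m : at^{2m} + a_bt^b + \cdots : 1)$ with $a, a_b$ nonzero and $b > 2m$ minimal (absorbing any terms of order strictly between $2m$ and $b$ into the leading coefficient). The orders $0, m, 2m, 3m, 4m$ still come from $z^2, xz, x^2, xy, y^2$; to produce the last value $c$ I would start from $yz - ax^2$, whose leading term is $a_bt^b$, and successively subtract multiples of $xy$ and of $y^2$ to kill any further term whose order is a multiple of $m$. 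This breaks into the subcases $2m < b < 3m$, $3m < b < 4m$, $b > 4m$ — in all of which $c = b$, the conic being determined up to a one- or two-parameter family when $m > 1$ — together with the boundary cases $b = 3m$ and $b = 4m$. The main obstacle is precisely these boundary cases, where $c = b$ is forbidden and one must still show that $c$ exists with $c > 2m$ and $c \neq 3m, 4m$: here I would use unibranchedness in the form that the exponents occurring in the $y$-coordinate of a genuine Puiseux parametrization cannot all be divisible by $m$ (otherwise the parametrization would factor through $t \mapsto t^m$ and describe an $m$-fold cover of a branch rather than a single branch), so there is an exponent $> b$ not divisible by $m$, and a suitable combination attains such an order as $c$, automatically away from $3m$ and $4m$.

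Finally I would record the two remaining assertions. Irreducibility of any conic attaining $c$ follows because a line through $p$ meets $C$ at $p$ with multiplicity $m$ if it is not the tangent and $l = 2m$ if it is, so a reducible conic attains only multiplicities in $\{0, m, 2m, 3m, 4m\}$, whereas $c > 2m$ and $c \neq 3m, 4m$. Uniqueness of $c$ — even when the conic itself is not unique — is then automatic, since $0 < m < 2m < 3m < 4m < c$ are the six jumps of the osculation filtration of the linear system of conics at $p$, which is intrinsic and independent of any chosen conic.
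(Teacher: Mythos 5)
Your proposal follows essentially the same route as the paper's proof: Puiseux parametrization $(t^m:at^l+\cdots:1)$, substitution into the six monomials $x^2,\dots,xy$, the observation that the orders are distinct when $l\neq 2m$, and in the case $l=2m$ the same cancellation procedure starting from $yz-ax^2$ with the same subcase analysis on $b$ and the same appeal to primitivity of the parametrization (exponents in the $y$-coordinate not all divisible by $m$) for the boundary cases $b=3m,4m$. Your concluding observations that a reducible conic can only achieve multiplicities in $\set{0,m,2m,3m,4m}$ and that $c$ is intrinsic as the sixth jump of the order filtration are slightly more conceptual than the paper's reading of these facts off the explicit equations, but the argument is the same in substance and correct.
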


Motivated by the notion of tangent line at a cusp and \cref{lem:oscint}, we give the following definition.
\begin{definition}\label{def:oscconcusp}
An \emph{osculating conic} at a cusp $p$ for which $l=2m$ is a conic that intersects $C$ at $p$ with intersection multiplicity $c$, for a $c > 2m$ and $c \neq 3m,4m$.
\end{definition}

\begin{proof}[Proof of \cref{thm:SPF}]
	Let $C$ be a cuspidal curve of geometric genus $g$, and let $Q$ be the complete linear system of curves of degree 2 on $C$, with $\deg Q=2d$ and $r=\dim Q = 5$. Hence, the right hand side of the formula in \cref{prop:BG34} reads \[6(2d+5g-5).\]

For the left hand side of the formula, to compute the $2$-Weierstrass weight of a point $p$, we proceed by considering the set of points for which $l \neq 2m$ and the set of points for which $l=2m$, respectively.

\smallskip \noindent \textit{The case $l\neq2m$:} This case includes the inflection points and some of the cusps, and coincides with the set $I$.  

Inserting the values from \cref{lem:oscint} into \cref{eq:wp-used} yields
\begin{align*}
%\label{eq:wp-sing-pts}
\begin{split}
w_p(2) & = \sum_{i=0}^5 (h_i - i) \\
& = 4m + 4l - 15.
\end{split}
\end{align*}

\smallskip \noindent \textit{The case $l=2m$:} Note that this case includes all smooth points that are not inflection points and some of the cusps; the latter points constitute the set $J$.

As above, \cref{lem:oscint} provides the possible intersections, and we gather
\begin{align*}
%\label{eq:l2m-intersections}
h_0 = 0,\; h_1 = m,\; h_2 = 2m,\; h_3 = 3m,\; h_4 = 4m,\; h_5=c.
\end{align*}
Thus, the $2$-Weierstrass weight of $p$ is
	\begin{equation*}
	\label{eq:leq2m}
	w_p(2)  = 10m+c-15.
	\end{equation*}

\noindent Note that if $p$ is smooth, then $w_p(2)$ is equal to its order as a sextactic point, $w_p(2)=c-5=s_p$. The formula is valid even when $p$ is not sextactic, as in this case $c=5$, or equivalently, $w_p(2)=0$. Hence, the total number of sextactic points on $C$, counted with multiplicity, is $s  = \sum s_p$.
	
	\medskip
	\noindent 
	Putting all this together while isolating $s$, we get
	\begin{align*}
	s = 6(2d + 5g - 5) - \sum_{I} (4m_p + 4l_p - 15) - \sum_{J} (10m_p + c_p - 15).
	\end{align*}
\end{proof}
\begin{remark}\label{rem:gensing2}
	A sextactic point formula for curves with arbitrary singularities follows from \cref{thm:SPF} by decomposing all singularities down to their irreducible branches, and subsequently calculating the Weierstrass weights for each branch separately, see \cite{Not99} and \cite{Per90}. In fact, the formula reads the same as in the cuspidal case, except that in this case $I$ denotes the set of inflection points and the branches of singular points where $l\neq 2m$, and $J$ denotes the branches where $l=2m$. In other words, both cusps, inflection points, and sextactic points might hide in branches of singular points with multiple branches, and these must be accounted for. 
\end{remark}

%\subsubsection{Computing $l$ and $c$}
\subsection{A lemma and examples}
The essential ingredients depending on $C$ in the sextactic point formula are the degree $d$ and genus $g$, and $m_p$, $l_p$, and $c_p$ for its inflection points and cusps. Finding these numbers usually requires less heavy calculations than applying the $2$-Hessian directly. 

Moreover, we have the following lemma, which shows that $l_p$ and $c_p$ for a cusp $p$ sometimes can be determined merely by the degree of $C$ and the multiplicity sequence $\overline{m}_p$. 

\begin{lemma}\label{lem:boundc}
Let $p$ be a cusp with multiplicity sequence $\ol{m}=[m,m_1, \ldots, 1]$ on a plane curve $C$ of degree $d\geq 3$. Then \[d \geq l=km + m_k \geq m+m_1\] for some $k \geq 1$, with $m = m_1 = \ldots = m_{k-1}$. 

Moreover, if $l=2m$, \[2d \geq c=km + m_k > 2m\] for some $k \geq 2$, with $m = m_1 = \ldots = m_{k-1}$, and $c \neq 3m,4m$.

\begin{proof}
	In the first case, since $l$ is the intersection multiplicity of a curve and a line at a point, it follows from Bézout's theorem that $d \geq l$. Moreover, by \cite[Proposition~1.2, p.~440]{FZ96}, we have $l=km + m_k$ for some $k \geq 1$, with $m = m_1 = \ldots = m_{k-1}$, from which we derive the last inequality.

	In the second case, since $c$ is the intersection multiplicity of a curve and a conic at a point, it follows from Bézout's theorem that $c \leq 2d$. Additionally, by \cref{lem:oscint}, the curve $O_p$ is irreducible, so as above, \cite[Proposition~1.2, p.~440]{FZ96} ensures that $c=km + m_k$ for some $k \geq 1$, with $m = m_1 = \ldots = m_{k-1}$. Again by \cref{lem:oscint}, $c > 2m$ and $c \neq 3m,4m$, and the result follows.
	\end{proof}
\end{lemma}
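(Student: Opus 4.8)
The plan is to prove \cref{lem:boundc} in two parts, mirroring the two cases in its statement, and in each case the core is to combine a B\'ezout bound with the Flenner--Zaidenberg description of intersection multiplicities at a cusp in terms of the multiplicity sequence.

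First I would treat the tangent-line case. Since $l = (T_p \cdot C)_p$ is the local intersection multiplicity of the plane curve $C$ of degree $d$ with the line $T_p$, B\'ezout's theorem immediately gives $d \geq l$ (the total intersection number of a degree-$d$ curve with a line is $d$, and one local contribution cannot exceed the total). For the middle equality $l = km + m_k$, I would invoke \cite[Proposition~1.2, p.~440]{FZ96}, which expresses the intersection multiplicity of a curve with the tangent line at a cusp $p$ in terms of the multiplicity sequence: there is a smallest index $k \geq 1$ at which the multiplicity $m_k$ drops below $m$ (equivalently, the first blow-up at which the strict transform of $T_p$ separates from the strict transform of $C$), and one has $m = m_1 = \cdots = m_{k-1}$ and $l = m + m_1 + \cdots + m_{k-1} + m_k = km + m_k$. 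From $k \geq 1$ and $m_k \geq m_1$ (the multiplicity sequence is non-increasing, and $m_1$ is the next entry), the inequality $l = km + m_k \geq m + m_1$ follows: if $k \geq 2$ then $km \geq m + m$ and $m_k \geq m_k \geq 0$ while $m_1 = m$, giving $km + m_k \geq 2m = m + m_1$; if $k = 1$ then $l = m + m_1$ exactly. So $d \geq l = km + m_k \geq m + m_1$.

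Second, the osculating-conic case, under the hypothesis $l = 2m$. Here $c = (O_p \cdot C)_p$ is the local intersection multiplicity of $C$ with a conic, so B\'ezout with a degree-$2$ curve gives a total intersection number $2d$, hence $c \leq 2d$. The subtlety is that the Flenner--Zaidenberg proposition applies to \emph{irreducible} curves through the cusp, so I must first record that $O_p$ is irreducible — but this is exactly the content of \cref{lem:oscint} in the case $l = 2m$ (``a curve of degree $2$ that intersects $C$ at $p$ with intersection multiplicity $c$ is irreducible''). With irreducibility in hand, \cite[Proposition~1.2]{FZ96} again yields $c = km + m_k$ for some $k \geq 1$ with $m = m_1 = \cdots = m_{k-1}$; and the remaining constraints $c > 2m$ and $c \neq 3m, 4m$ are quoted directly from \cref{lem:oscint}. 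The bound $k \geq 2$ then follows because $k = 1$ would force $c = m + m_1 \leq 2m$, contradicting $c > 2m$.

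The main obstacle is not any calculation but making sure \cite[Proposition~1.2, p.~440]{FZ96} is applied to an object it covers: the proposition describes how a germ of an \emph{irreducible} curve meets $C$ at the cusp in terms of the resolution, so the reduction to the irreducible case (trivial for the line, and supplied by \cref{lem:oscint} for the conic) is the hinge of the argument. Everything else — the two B\'ezout inequalities and the elementary estimate $km + m_k \geq m + m_1$ from monotonicity of the multiplicity sequence — is routine.
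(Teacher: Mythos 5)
Your proof is correct and follows essentially the same route as the paper: B\'ezout's theorem for the bounds $d \geq l$ and $c \leq 2d$, the Flenner--Zaidenberg proposition for the expression $km + m_k$, and \cref{lem:oscint} both for the irreducibility of the conic and for the constraints $c > 2m$, $c \neq 3m, 4m$. The only difference is that you spell out the elementary deductions ($l \geq m + m_1$ and $k \geq 2$) that the paper leaves implicit, which is fine.
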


In \cref{ex:QuinticC3B,ex:binomial} we consider two cuspidal curves and use the formula in \cref{thm:SPF} to compute the number of sextactic points on these curves. Note that these curves have the same degrees and singularities; they are equisingularly equivalent. However, the curves are not projectively equivalent, and they have different numbers of inflection points and sextactic points, see \cref{rem:3536}. We revisit these curves in \cref{ex:QuinticC3B2,ex:binomial2}.
\begin{example}\label{ex:QuinticC3B}
	Let $C$ be the cuspidal quintic given by \[F = y^5+2x^2y^2z - x^3z^2-xy^4.\] 
	By explicit calculations with appropriate associated curves it can be shown that this curve has two cusps; $p_1$ with multiplicity sequence $[3,2]$ and $p_2$ with multiplicity sequence $[2_2]$. Additionally, it has one simple inflection point, $p_3$, and two sextactic points, $p_4$ and $p_5$, see \cref{tab:QuinticC3B}. 
	
	 On the other hand, it is possible to compute the number of sextactic points directly using \cref{thm:SPF}. For $p_1$ with multiplicity sequence $[3,2]$, it follows from the first part of \cref{lem:boundc} that $l=3+2=5\neq6= 2m$, so \[w_{p_1}(2)=4\cdot 3 + 4 \cdot 5 - 15=17.\] For $p_2$ with multiplicity sequence $[2_2]$, \cref{lem:boundc} does not determine $l$, but it can be calculated directly from the defining polynomials of $C$ and $T_{p_2}$ that $l=4=2m$. By the second part of \cref{lem:boundc} we have that $c=2+2+1=5$, so \[w_{p_2}(2)=10\cdot 2+5-15=10.\] For the inflection point $p_3$, $m=1$ and $l=3$, so \[w_{p_3}(2)=4\cdot1+4\cdot3-15=1.\] 		
	
	Thus, the number of sextactic points on $C$ is
	\begin{align*}
	s  = & \;6 \cdot (2\cdot5 + 5 \cdot0 - 5) - 17-10-1\\
	 = & \;2.
	\end{align*}
	
		\begin{table}[ht]
		\centering
		\begin{tabular}{ccccccc}
				
			Point $p$ & $\overline{m}_p$ & $\delta_p$ & \((T_p \cdot C)_p\) & \((O_p \cdot C)_p\) & \((H \cdot C)_p\) & \((H_2 \cdot C)_p\)\\
			
			\midrule
			
			%singular point
			$(0:0:1)$  
			& [3,2] & 4 & 5 & - & 29 & 108 \\ [1ex]
			
			%singular point
			$(1:0:0)$  
			& [$2_2$] & 2 & 4 & 5 & 15 & 55 \\ [1ex]
			
			% inflection point
			$(\frac{759375}{28672}:\frac{3375}{448}:1)$
			&  & 0 & 3 & - & 1 & 0 \\[1ex]
			
			% sextactic point
			$p_4$
			&  & 0 & 2 & 6 & 0 & 1 \\ [1ex]
		
			$p_5$
			&  & 0 & 2 & 6 & 0 & 1 \\ [1ex]
		
		\end{tabular}
		%}
		\caption[Quintic $C_{\text{3A}}$]{Invariants and intersections for the curve in \cref{ex:QuinticC3B}.}
		\label{tab:QuinticC3B}
	\end{table}
	
\end{example}

\begin{example}\label{ex:binomial}
	Let $C$ be the rational cuspidal quintic given by \[F=x^3z^2-y^5.\] This curve has two cusps, $p_1$ with multiplicity sequence $[3,2]$ and $p_2$ with multiplicity sequence $[2_2]$, and no inflection points. See \vref{tab:QuinticC3A}.	
  
  For both these cusps we have that $l=5 \neq 2m$, hence \[w_{p_1}(2)=4 \cdot 3+4 \cdot 5-15=17,\] and \[w_{p_2}(2)=4 \cdot 2+4 \cdot 5-15=13.\] 

Thus, the number of sextactic points on $C$ is
\begin{align*}
s  = & \;6 \cdot (2\cdot5 + 5 \cdot0 - 5) - 17-13\\
= & \;0.
\end{align*}

\begin{table}[ht]
	\centering
	%	\resizebox{\textwidth}{!}{
	\begin{tabular}{ccccccc}
		%\toprule
		
		Point $p$ & $\overline{m}_p$ & $\delta_p$ & \((T_p \cdot C)_p\) & \((O_p \cdot C)_p\) & \((H \cdot C)_p\) & \((H_2 \cdot C)_p\)\\
		\midrule
		%singular point
		$(0:0:1)$  
		& [3,2] & 4 & 5 & - & 29 & 108 \\ [1ex]
		
		%singular point
		$(1:0:0)$  
		& [$2_2$] & 2 & 5 & - & 16 & 57 \\ [1ex]
		
		%\bottomrule
	\end{tabular}
	%	}
	\caption[Quintic $C_{\text{3A}}$]{Invariants and intersections for the curve in \cref{ex:binomial}.}
	\label{tab:QuinticC3A}
\end{table}
\end{example}

\begin{remark}\label{rem:3536}
	Note that the curves from \cref{ex:QuinticC3B,ex:binomial} both belong to the family of equisingular curves given by \[V\left(y^5-x(x z-\lambda y^2)^2\right),\quad \lambda \in \C,\] with $\lambda=1$ and $\lambda=0$, respectively. Indeed, for $\lambda\neq 0$, the curves in the family are algebraically equivalent to the curve in \cref{ex:QuinticC3B}. In this case, the intersection multiplicity of the curve and its tangent at the cusp with multiplicity sequence $[2_2]$ is equal to $4$, while in \cref{ex:binomial}, where $\lambda=0$, it jumps to $5$. The difference in the intersection multiplicities leads to different Weierstrass weights. This gives room for smooth Weierstrass points when $\lambda \neq 0$, while there are no smooth Weierstrass points when $\lambda = 0$, see also \cref{rem:binomial}.
	
\end{remark}

\subsection{A corollary that ties things together}
As a corollary to \cref{thm:SPF}, we state a formula that reflects the intersection of a curve of degree $d$ with its 2-Hessian of degree $12d-27$.
\begin{corollary}
\label{cor:SPF}
Let $C$ be a cuspidal curve of genus $g$ and degree $d \geq 3$, and let $\delta_p$ denote the delta invariant of a singular point $p$. Then with notation as in \cref{thm:SPF}, the following equations hold:
{\small{
\begin{align*}
%\label{eq:cSPF-formula}
d(12d - 27)+3d(d-2) &= s + 30 \sum \delta_p + \sum_I (4m_p+4l_p-15) + \sum_J(10m_p+c_p-15),\\
d(12d - 27) &= s + 24 \sum \delta_p + \sum_I (3m_p+3l_p-12) + \sum_J(7m_p+c_p-12).
\end{align*}
}}
\end{corollary}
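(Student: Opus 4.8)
The plan is to compute the two intersection numbers $(H \cdot C) = \sum_p (H\cdot C)_p$ and $(H_2 \cdot C) = \sum_p (H_2 \cdot C)_p$ via B\'ezout's theorem and then match the local contributions against the Weierstrass weights already computed in the proof of \cref{thm:SPF}. By B\'ezout, $(H\cdot C) = 3d(d-2)$ and $(H_2 \cdot C) = d(12d-27)$, since $\deg H = 3(d-2)$ and $\deg H_2 = 12d-27$ and $C$ has degree $d$. The intersection points in both cases are, by \cref{thm:2hessian} (and the classical statement for $H$), exactly the singular points, the higher order inflection points, and the sextactic points of $C$. So the whole content is a bookkeeping identity: express each local intersection multiplicity $(H\cdot C)_p$ and $(H_2\cdot C)_p$ in terms of $m_p$, $l_p$, $c_p$, and $\delta_p$, and check that summing gives the claimed formulas.

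First I would record the local contributions at each relevant point. At a smooth inflection point (so $l = l_p > 2$, $m=1$), the classical fact is $(H\cdot C)_p = l - 2 = v_p$; more generally one needs the local intersection multiplicity of the Hessian with a cusp, which by the Weierstrass--Noether Pl\"ucker formula cited in the introduction (see \cite[p.~586]{BK86}) contributes $l - 2 + $ (a term depending on $\delta_p$, $m_p$) — I would extract the exact local formula, namely that at a cusp $p$ the Hessian meets $C$ with multiplicity $6\delta_p + (l_p - m_p) + (\text{correction})$; the clean way is to use that $\sum (H\cdot C)_p = 3d(d-2)$ together with the known global Pl\"ucker-type count of inflections. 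For $H_2$, the analogous local data comes directly from the proof of \cref{thm:SPF}: at a point in $I$ the $2$-Weierstrass weight is $w_p(2) = 4m_p + 4l_p - 15$ and at a point in $J$ it is $10m_p + c_p - 15$, while the purely singular contribution (the ``extra'' beyond the Weierstrass weight coming from the partial-derivative row in each term of $H_2$) must be isolated. The key algebraic observation is that the difference between the two displayed equations in the corollary is exactly $3d(d-2) = (H\cdot C)$, with matching local differences $\delta_p \mapsto 6\delta_p$ and $I,J$-sums shifting by $(m_p + l_p - 3)$ and $(3m_p - 3)$ respectively — i.e. the second equation is the first minus the Hessian identity. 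So it suffices to prove the second equation, $(H_2 \cdot C) = d(12d-27) = s + 24\sum\delta_p + \sum_I(3m_p+3l_p-12) + \sum_J(7m_p+c_p-12)$, and then add the Hessian B\'ezout identity to get the first.

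Concretely, the steps are: (1) invoke \cref{prop:BG34} with $r=5$, $\deg Q = 2d$ to get $\sum_p w_p(2) = 6(2d+5g-5)$, which by \cref{thm:SPF}'s computation equals $s + \sum_I(4m_p+4l_p-15) + \sum_J(10m_p+c_p-15)$; (2) invoke B\'ezout for $H_2$: $\sum_p (H_2\cdot C)_p = d(12d-27)$; (3) establish the local identity $(H_2\cdot C)_p = w_p(2) + e_p$ where $e_p$ is the contribution forced by the common factor involving partial derivatives of $F$ at singular points, and show $e_p = 0$ at smooth points while $\sum e_p$ over cusps can be pinned down by comparing (1) and (2) — alternatively, read $e_p$ off the examples (the tables in \cref{ex:QuarticC1B,ex:QuinticC3B,ex:binomial} give $(H_2\cdot C)_p$ at cusps, e.g. $81$ for $\bar m = [3]$, $108$ for $[3,2]$, $55$ and $57$ for $[2_2]$) and match to $24\delta_p + (\text{the } I/J \text{ term shifted})$; (4) combine to obtain the second equation, then add $(H\cdot C) = 3d(d-2)$ distributed as $6\delta_p$ over singular points plus $(l_p-2)$ over inflections (and the corresponding cusp terms) to obtain the first. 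The main obstacle I anticipate is step (3): cleanly justifying the precise local intersection number of $H$ and of $H_2$ with $C$ at a cusp in terms of $\delta_p$, $m_p$, $l_p$ (and $c_p$). This requires either a careful local computation using the Puiseux parametrization — substituting $(t^{m}: at^l+\cdots : 1)$ into $H$ and $H_2$ and taking $\ord_t$, which is delicate because of the many cancellations in these determinants — or a clean appeal to the generalized Pl\"ucker/Weierstrass--Noether formulas for singular curves. I would take the latter route: use the known Hessian Pl\"ucker formula to fix the $H$-side exactly, then \emph{derive} the $H_2$ local contributions by subtracting, so that the only genuinely new computation is the global B\'ezout count, which is immediate.
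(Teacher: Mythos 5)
Your plan has a genuine gap at its core: step (3), the local identity $(H_2\cdot C)_p = w_p(2) + e_p$ with $e_p$ at a cusp equal to $24\delta_p$ plus the shifted $I/J$-term, is exactly \cref{con:local} of the paper, which the authors explicitly leave as a \emph{conjecture}, verified only computationally for rational cuspidal quartics and quintics. Neither of your fallbacks repairs this: reading values off the tables in \cref{ex:QuarticC1B,ex:QuinticC3B,ex:binomial} is not a proof, and ``pinning down $\sum e_p$ by comparing (1) and (2)'' only determines the single number $d(12d-27)-6(2d+5g-5)$, not its distribution over the cusps as $24\delta_p$ plus point-by-point corrections; in particular it cannot by itself convert the genus $g$ into $\sum\delta_p$, which is what the corollary's formulas require. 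Likewise, the closing suggestion to ``derive the $H_2$ local contributions by subtracting'' global identities cannot yield local multiplicities. So as written, the proof of your second equation rests on an unproved statement that is strictly stronger than the corollary itself.

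The corollary is in fact much softer than a Bézout statement for $H_2$, and the paper proves it without any intersection theory for $H_2$. The first equation is nothing but \cref{thm:SPF} with Clebsch's genus formula $g=\tfrac{(d-1)(d-2)}{2}-\sum\delta_p$ substituted in: the left-hand side $d(12d-27)+3d(d-2)=15d^2-33d$ is only \emph{suggestively} written as a sum of degree products. The second equation is then obtained by subtracting the known inflection-point (Pl\"ucker-type) formula for cuspidal curves, $v=3d(d-2)-\sum(6\delta_p+m_p+l_p-3)$, rewritten as $0=3d^2-6d-6\sum\delta_p-\sum_{I\cup J}(m_p+l_p-3)$. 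You actually observed the right structural fact — that the difference of the two displayed equations is precisely this Hessian identity — but you then oriented the argument the hard way: you propose to prove the second equation directly (which forces the conjectural local $H_2$ data) and add the Hessian identity, whereas the workable route is to prove the first equation by pure algebra from \cref{thm:SPF} and Clebsch, and \emph{subtract} the Hessian identity to get the second. With that reversal, and dropping all appeals to $(H_2\cdot C)_p$, your argument becomes the paper's proof.
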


\begin{remark}
	Recall that in the case of cusps, $\delta_{p}$ can be calculated from the multiplicity sequence $\ol{m}_p$, \[ \delta_{p}= \sum \frac{m_{i}(m_{i}-1)}{2},\] where $m_i$ is the $i$th element in $\ol{m}_p$. 
\end{remark}

Before we prove \cref{cor:SPF}, note that the two formulas could be interpreted as an application of Bézout's theorem to $C$ and its Hessian and 2-Hessian; the terms $3d(d-2)$ and $d(12d-27)$ are simply the product of the respective degrees. The remaining terms are local in nature, and we claim in \cref{con:local} that these terms reflect a natural geometrical interpretation. We have verified that the conjecture holds for all rational cuspidal curves of degree 4 and 5, see \cite{Mau17}. Note that a similar result is proved for the Hessian curve of a cuspidal curve, see \cite[Theorem 2.1.9, p.~32]{Moe13}.
\begin{conjecture}
\label{con:local}
The intersection multiplicity $(H_2 \cdot C)_p$ of a cuspidal curve $C$ and its 2-Hessian curve $H_2$ at a point $p$ is determined by the multiplicity $m$, the delta invariant $\delta$, and the intersection multiplicity with the tangent $l$ or the intersection multiplicity with an osculating conic $c$.

If $p$ is a point on $C$ such that $l\neq 2m$, then 
\begin{equation*}
(H_2 \cdot C)_p=24\delta+3m+3l-12.
\end{equation*}

If $p$ is a point on $C$ such that $l = 2m$, then
\begin{equation*}
(H_2 \cdot C)_p=24\delta+7m+c-12.
\end{equation*}
\end{conjecture}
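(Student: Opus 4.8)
The plan is to deduce \cref{con:local} from two more tractable ingredients: the known local intersection formula for the ordinary Hessian of a cuspidal curve, and the $2$-Weierstrass weights already computed in the proof of \cref{thm:SPF}. The bridge between them is the remark that $V(H \cdot H_2)$ cuts out the $2$-Weierstrass locus of $C$; promoting this set-theoretic statement to an equality of local multiplicities is where the new work lies. Concretely, I would first record the local Hessian formula $(H \cdot C)_p = 6\delta_p + m_p + l_p - 3$, valid at every smooth point and cusp, which for cuspidal curves is established in \cite[Theorem~2.1.9]{Moe13}. Together with \cref{lem:oscint}, which supplies $w_p(2) = 4m_p + 4l_p - 15$ when $l_p \neq 2m_p$ and $w_p(2) = 10m_p + c_p - 15$ when $l_p = 2m_p$, this reduces the conjecture to a single pointwise identity.

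That identity is
\[
(H \cdot C)_p + (H_2 \cdot C)_p = w_p(2) + 30\,\delta_p .
\]
Indeed, substituting the Hessian formula and the two values of $w_p(2)$ and solving for $(H_2 \cdot C)_p$ returns precisely $24\delta_p + 3m_p + 3l_p - 12$ in the case $l_p \neq 2m_p$ and $24\delta_p + 7m_p + c_p - 12$ in the case $l_p = 2m_p$, which are exactly the two formulas claimed in \cref{con:local}. The global shadow of this identity is already available: Bézout gives $\sum_p\bigl[(H\cdot C)_p + (H_2\cdot C)_p\bigr] = 3d(d-2) + d(12d-27) = 15d^2 - 33d$, while \cref{prop:BG34} together with the genus formula $g = \binom{d-1}{2} - \sum_p \delta_p$ rewrites $\sum_p w_p(2) = 6(2d+5g-5) = 15d^2 - 33d - 30\sum_p \delta_p$. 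Subtracting shows that the two sides agree after summation, so the content of the conjecture is exactly the claim that the discrepancy $30\sum_p\delta_p$ distributes over the singular points as $30\delta_p$ each, with no contribution at smooth points.

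To establish the pointwise identity I would interpret $H \cdot H_2$, restricted to $C$, as a section that plays the role of the Wronskian (ramification) divisor of the complete degree-$2$ linear system $Q$, which has dimension $r = 5$. On the smooth locus this section and the intrinsic Wronskian of $Q$ agree up to a nonvanishing factor, so $\delta_p = 0$ forces no correction there. At a cusp one pulls both sections back to the normalization via the Puiseux parametrization of \cref{lem:oscint} and compares orders of vanishing: the intrinsic Wronskian contributes the Ballico--Gatto weight $w_p(2)$, whereas the polynomial product $H\cdot H_2$ picks up an additional factor governed by the conductor. The expected bookkeeping is that the discrepancy equals $\binom{r+1}{2}$ times the local conductor degree $2\delta_p$, that is $\binom{6}{2}\cdot 2\delta_p = 30\delta_p$, matching the required correction.

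The main obstacle is precisely this local comparison at a cusp. Unlike the global count, it cannot be read off from degrees: one must control how the three summands of $H_2$ in \cref{thm:2hessian}, each built from $\Jac(F,H,\Omega_{\bar H})$, $\Jac(F,H,\Omega_{\bar F})$ and $\Jac(F,H,\Psi)$, behave under substitution of the Puiseux parametrization, and in particular rule out that cancellations among their leading terms alter the order of vanishing. A clean route would combine Notari's local Wronskian algorithm \cite{Not99} with an explicit description of the conductor of a cusp from its multiplicity sequence, so that the factor $\binom{6}{2}\cdot 2\delta_p$ emerges structurally rather than through a lengthy case analysis over the possible values of $k$ in \cref{lem:boundc}. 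This is the step that resists a short argument, which is consistent with the statement being offered here as a conjecture, verified computationally for all rational cuspidal curves of degrees $4$ and $5$.
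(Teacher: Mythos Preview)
The paper offers no proof of \cref{con:local}; it is stated as a conjecture, supported only by computational verification for rational cuspidal curves of degrees $4$ and $5$. Your proposal is likewise not a proof, and you say so yourself in the final paragraph. So there is nothing to compare at the level of complete arguments.

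That said, your reduction is correct and genuinely clarifying. The pointwise identity
\[
(H \cdot C)_p + (H_2 \cdot C)_p = w_p(2) + 30\,\delta_p
\]
is equivalent to the two formulas of the conjecture once one uses the local Hessian formula $(H\cdot C)_p = 6\delta_p + m_p + l_p - 3$ from \cite[Theorem~2.1.9]{Moe13} and the Weierstrass weights from \cref{lem:oscint}; the algebra checks in both cases. Your observation that the global version of this identity is exactly the first formula of \cref{cor:SPF}, and hence that the conjecture amounts to the assertion that the defect $30\sum_p\delta_p$ is distributed as $30\delta_p$ at each cusp, isolates the content of the conjecture precisely.

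The gap you name is the real one. The heuristic that $H\cdot H_2$ restricted to $C$ should differ from the intrinsic Wronskian of the degree-$2$ system by a conductor contribution of $\binom{6}{2}\cdot 2\delta_p = 30\delta_p$ at each cusp is attractive, and it is consistent with the analogous $\binom{3}{2}\cdot 2\delta_p = 6\delta_p$ term in the Hessian formula, but making it rigorous requires controlling the order of vanishing of each of the three Jacobian summands in $H_2$ under the Puiseux substitution and ruling out unexpected cancellations. Neither you nor the paper carries this out; the paper does not even record the reduction you give, so in that sense your proposal goes further than the text while stopping at the same obstruction.
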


\begin{proof}[Proof of \cref{cor:SPF}]
By substituting Clebsch' formula for the genus of a plane curve, see \cite[p.~393]{Har77}, \[g=\frac{(d-1)(d-2)}{2}-\sum \delta_p,\] into the formula from \cref{thm:SPF}, we infer that
\begin{equation}
\label{eq:sextclebsch}
s = 15d^2 -33d - 30 \sum \delta_p -\sum_I (4m_p + 4l_p - 15)-\sum_J(10m_p+c_p-15),
\end{equation} 
which can be rewritten as the first formula.

Moreover, the inflection point formula for cuspidal curves, explicitly stated in \cite[Theorem 2.1.8, p.~32]{Moe13}, reads \[v=3d(d-2)-\sum(6 \delta_p + m_p+l_p-3),\] where $v$ denotes the number of inflection points counted with multiplicity, and where the sum is taken over all cusps on $C$. This formula can be rewritten as 
\begin{equation}
\label{eq:inflpt}
0=3d^2-6d-6\sum \delta_p - \sum_{I \cup J}(m_p+l_p-3).
\end{equation} 
 
 By subtracting \cref{eq:inflpt} from \cref{eq:sextclebsch} and sorting terms, we reach the second formula.
\end{proof}

\section{Sextactic points on rational curves} % (fold)
\label{sec:rational}
In this section we assume that $C$ is a rational plane curve, i.e. $g=0$ and $C$ can be given by a parametrization \[\varphi(s,t)= (\varphi_0(s,t) : \varphi_1(s,t) : \varphi_2(s,t)), \; \text{for } (s:t) \in \mathbb{P}^1.\] 
Properties of this parametrization can be exploited to find key information about the $2$-Weierstrass points of a rational curve in a natural way. 

\begin{remark}
The results in \cref{thm:ratosccon,thm:ratww} build upon standard tools for studying Weierstrass points and hyperosculating spaces, see \cite{Arn96, Mir95, Per90, Pie76}, as well as results from the previous sections. Indeed, the classical flavour of the statements indicate that the results are well known. However, we have failed to find a suitable reference, and include the results and their proofs for completion.    
\end{remark}

First in this section, we state a corollary to \cref{thm:SPF} for rational cuspidal curves, which is obtained by setting $g=0$.
\begin{corollary}
	\label{cor:rational-SPF}
With notation as in \cref{thm:SPF}, the number of sextactic points $s$, counted with multiplicity, on a rational cuspidal curve of degree $d \geq 3$ is given by
	\begin{align*}
	s = 6(2d - 5) - \sum_I (4m_p + 4l_p - 15) -\sum_J(10m_p+c_p-15).
	\end{align*}
\end{corollary}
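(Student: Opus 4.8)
The plan is essentially immediate: \cref{cor:rational-SPF} is the specialization of \cref{thm:SPF} to the case $g=0$, so the proof is a one-line substitution. First I would recall that a rational curve has geometric genus $g=0$ by definition, since it admits a parametrization $\varphi \colon \P^1 \to C$. Then I would simply invoke \cref{thm:SPF}, which states that for a cuspidal curve of genus $g$ and degree $d \geq 3$, the number of sextactic points counted with multiplicity is
\[
s = 6(2d + 5g - 5) - \sum_I (4m_p + 4l_p - 15) - \sum_J (10m_p + c_p - 15),
\]
and set $g = 0$, so that the leading term $6(2d + 5g - 5)$ collapses to $6(2d - 5)$, while the two local correction sums over $I$ and $J$ are unchanged. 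This yields exactly the displayed formula in the statement.

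The only thing worth remarking on is that the hypotheses of \cref{thm:SPF} — that $C$ is cuspidal of degree $d \geq 3$ — are carried over verbatim into \cref{cor:rational-SPF}, with the additional (stronger, but compatible) assumption that $C$ is rational; rationality is precisely the condition $g = 0$ for a curve whose only singularities are cusps (all cusps are unibranched, so the geometric genus equals the arithmetic genus minus the total delta invariant, and $g=0$ is what "rational" means). Thus no new argument, estimate, or case analysis is needed.

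There is no real obstacle here: the "proof" is the observation that $g=0$ for rational curves together with a direct appeal to the already-established \cref{thm:SPF}. If anything, the only point requiring a half-sentence of care is confirming that the sets $I$ (inflection points and cusps with $l_p \neq 2m_p$) and $J$ (cusps with $l_p = 2m_p$) are defined exactly as in \cref{thm:SPF} and do not interact with the genus hypothesis, which they do not.

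\begin{proof}[Proof of \cref{cor:rational-SPF}]
A rational curve has geometric genus $g = 0$. Substituting $g = 0$ into the formula of \cref{thm:SPF}, the term $6(2d + 5g - 5)$ becomes $6(2d - 5)$, while the sums over $I$ and $J$ are unaffected. This gives the stated formula.
\end{proof}
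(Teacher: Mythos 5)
Your proof is correct and matches the paper exactly: the paper also obtains \cref{cor:rational-SPF} from \cref{thm:SPF} simply by setting $g=0$, since a rational curve has geometric genus zero. Nothing further is needed.
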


\subsection{The osculating conic for rational curves}
For a smooth point $p$ on a rational curve, it is possible to compute the osculating curve of degree $2$ directly from the parametrization. 
\label{sub:rational_osculating_conic}
\begin{theorem}
	\label{thm:ratosccon}
Let $C$ be a rational plane curve given by a parametrization $\varphi(s,t)$, and let $\omega(s,t)$ be the determinant
\begin{align*}
\omega(s,t)=\begin{vmatrix}
x^2 & y^2 & z^2 & yz & xz & xy \\[2pt]
\frac{\di^4 (\varphi_0^2)}{\di s^4} & \frac{\di^4 (\varphi_1^2)}{\di s^4} & \frac{\di^4 (\varphi_2^2)}{\di s^4} & \frac{\di^4 (\varphi_1\varphi_2)}{\di s^4} & \frac{\di^4 (\varphi_0\varphi_2)}{\di s^4} & \frac{\di^4 (\varphi_0\varphi_1)}{\di s^4} \\[5pt]
\frac{\di^4 (\varphi_0^2)}{\di s^3 \di t} & \frac{\di^4 (\varphi_1^2)}{\di s^3 \di t} & \frac{\di^4 (\varphi_2^2)}{\di s^3 \di t} & \frac{\di^4 (\varphi_1\varphi_2)}{\di s^3 \di t} & \frac{\di^4 (\varphi_0\varphi_2)}{\di s^3 \di t} & \frac{\di^4 (\varphi_0\varphi_1)}{\di s^3 \di t} \\[5pt]
\frac{\di^4 (\varphi_0^2)}{\di s^2 \di t^2} & \frac{\di^4 (\varphi_1^2)}{\di s^2 \di t^2} & \frac{\di^4 (\varphi_2^2)}{\di s^2 \di t^2} & \frac{\di^4 (\varphi_1\varphi_2)}{\di s^2 \di t^2} & \frac{\di^4 (\varphi_0\varphi_2)}{\di s^2 \di t^2} & \frac{\di^4 (\varphi_0\varphi_1)}{\di s^2 \di t^2} \\[5pt]
\frac{\di^4 (\varphi_0^2)}{\di s \di t^3} & \frac{\di^4 (\varphi_1^2)}{\di s \di t^3} & \frac{\di^4 (\varphi_2^2)}{\di s \di t^3} & \frac{\di^4 (\varphi_1\varphi_2)}{\di s \di t^3} & \frac{\di^4 (\varphi_0\varphi_2)}{\di s \di t^3} & \frac{\di^4 (\varphi_0\varphi_1)}{\di s \di t^3} \\[5pt]
\frac{\di^4 (\varphi_0^2)}{\di t^4} & \frac{\di^4 (\varphi_1^2)}{\di t^4} & \frac{\di^4 (\varphi_2^2)}{\di t^4} & \frac{\di^4 (\varphi_1\varphi_2)}{\di t^4} & \frac{\di^4 (\varphi_0\varphi_2)}{\di t^4} & \frac{\di^4 (\varphi_0\varphi_1)}{\di t^4}
\end{vmatrix}.
\end{align*}
Then, for a smooth point $p=\varphi(s_0,t_0)$, the polynomial $\omega(s_0,t_0) \in \C[x,y,z]_2$ is the defining polynomial of the osculating curve of degree 2 to $C$ at $p$.
\begin{proof}
Let $v_2(C) \subset \mathbb{P}^5$ denote the image of $C$ under the 2nd Veronese embedding of $\P^2$ to $\mathbb{P}^5$, such that \[v_2(C)(s,t)=(\varphi_0^2:\varphi_1^2:\varphi_2^2:\varphi_1\varphi_2:\varphi_0\varphi_2:\varphi_0\varphi_1).\] 

Now, consider the determinant $\tilde{\omega}(s,t)$, where in the first row of $\omega(s,t)$ the standard basis of plane conics is substituted with the coordinates of $\P^5$.

For a smooth point $v_2(C)(s_0,t_0)$, the linear polynomial $\tilde{\omega}(s_0,t_0)$ defines a unique osculating hyperplane to $v_2(C)$ in $\P^5$, and this hyperplane corresponds to the osculating curve of degree 2 to $C$ at $p=\varphi(s_0,t_0)$, with defining polynomial $\omega(s_0,t_0)$.
\end{proof}
\end{theorem}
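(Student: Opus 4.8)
The plan is to transfer the problem to $\P^5$ by the $2$nd Veronese embedding and then recognise $\omega$ as the classical determinant that computes an osculating hyperplane.

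First I would set up the dictionary. The map $v_2\colon\P^2\hookrightarrow\P^5$, $(x:y:z)\mapsto(x^2:y^2:z^2:yz:xz:xy)$, is a closed embedding under which a conic $Q=V(G)$ is cut out by a unique hyperplane $L_G$; since $v_2$ identifies $C$ with $v_2(C)$ and $Q\cap C$ with $L_G\cap v_2(C)$, one has $(C\cdot Q)_p=(v_2(C)\cdot L_G)_{v_2(p)}$ for every smooth point $p\in C$. As $C$ is irreducible of degree $d\ge3$ it lies on no conic, so $v_2(C)$ is nondegenerate in $\P^5$ and has a well-defined osculating flag at each point; in particular the unique conic $Q$ with $(C\cdot Q)_p\ge5$ furnished by \cref{thm:osccon} corresponds to the osculating hyperplane of $v_2(C)$ at $v_2(p)$. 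Writing $\psi=(\psi_0:\dots:\psi_5)=(\varphi_0^2:\varphi_1^2:\varphi_2^2:\varphi_1\varphi_2:\varphi_0\varphi_2:\varphi_0\varphi_1)$ for the induced parametrisation of $v_2(C)$, a tuple of six binary forms of degree $2d$, it then suffices to show that $\tilde\omega(s_0,t_0)$ — the determinant $\omega$ with its first row replaced by the homogeneous coordinates of $\P^5$ — is a nonzero scalar multiple of the linear form defining that osculating hyperplane.

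Next I would invoke the standard fact that for a nondegenerate parametrised curve $u\mapsto\Phi(u)$ in $\P^N$ with $\Phi,\Phi',\dots,\Phi^{(N-1)}$ linearly independent at $u_0$, the osculating hyperplane at $\Phi(u_0)$ is $\{[X]:\det\bigl(X,\Phi(u_0),\Phi'(u_0),\dots,\Phi^{(N-1)}(u_0)\bigr)=0\}$, a description insensitive to the choice of lift and of affine parameter because those replace the tuple $(\Phi,\Phi',\dots,\Phi^{(N-1)})$ by its product on the left with an invertible triangular matrix. The only genuine computation is then to match the \emph{mixed} partials $\partial^4\psi_i/\partial s^a\partial t^b$ appearing in $\omega$ with ordinary one-variable derivatives. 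Working in the chart $t\ne0$ and setting $\phi_i(u)=\psi_i(u,1)$, homogeneity of $\psi_i$ together with repeated use of Euler's identity gives, by induction on $b$, that $\partial^{a+b}\psi_i/\partial s^a\partial t^b$ evaluated at $(u_0,1)$ is a linear combination of $\phi_i^{(a)}(u_0),\dots,\phi_i^{(a+b)}(u_0)$ with coefficients independent of $i$, the coefficient of the lowest-order term $\phi_i^{(a)}(u_0)$ being $\prod_{j=0}^{b-1}(2d-a-j)\ne0$ since $2d\ge6$. Listing the rows $\partial^4_{s^4}\psi,\partial^4_{s^3t}\psi,\dots,\partial^4_{t^4}\psi$ against $\Phi^{(4)}(u_0),\Phi^{(3)}(u_0),\dots,\Phi^{(0)}(u_0)$ therefore exhibits an invertible triangular change of basis, so $\tilde\omega(s_0,t_0)=c\,\det\bigl(X,\Phi(u_0),\dots,\Phi^{(4)}(u_0)\bigr)$ for some $c\ne0$; pulling back along $v_2$, and treating the chart $s\ne0$ symmetrically, shows that $\omega(s_0,t_0)$ defines the osculating conic to $C$ at $p$.

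I expect the main obstacle to be exactly this last reconciliation: checking that replacing the genuine one-variable Wronskian by the homogeneous determinant in $\partial_s$ and $\partial_t$ costs only an invertible triangular change of basis requires keeping careful track of the homogeneity degrees and uses $2d\ge6$, which is also what guarantees that $v_2(C)$ is nondegenerate. It is worth recording — and it is consistent with the sequel — that the identification is valid at smooth points of $C$ that are not $2$-Weierstrass points: at the latter $\Phi(u_0),\dots,\Phi^{(4)}(u_0)$ become linearly dependent and $\omega(s_0,t_0)$ vanishes identically, which is precisely the vanishing that \cref{thm:ratww} will interpret as a count of $2$-Weierstrass points.
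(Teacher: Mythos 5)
Your main argument is the paper's own: pass to the $2$nd Veronese embedding, identify the osculating conic at $p$ with the osculating hyperplane to $v_2(C)$ at $v_2(p)$, and recognise $\tilde\omega$ as the determinantal equation of that hyperplane. The one place where you go beyond the paper --- checking that the rows of mixed homogeneous partials $\partial^4\psi/\partial s^a\partial t^b$ differ from the affine derivative vectors $\Phi^{(0)},\dots,\Phi^{(4)}$ only by an invertible triangular matrix, via repeated use of Euler's identity with nonvanishing diagonal coefficients $\prod_{j=0}^{b-1}(2d-a-j)$ --- is a detail the paper suppresses entirely, and your computation of it is correct.

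Your closing caveat, however, is wrong, and in a way that would gut the theorem if taken seriously. You assert that at $2$-Weierstrass points the vectors $\Phi(u_0),\dots,\Phi^{(4)}(u_0)$ become linearly dependent, so that $\omega(s_0,t_0)$ vanishes identically and the identification only holds at non-$2$-Weierstrass points. At a sextactic point the contact orders of conics with $C$ at $p$ are $0,1,2,3,4,c$ with $c\geq 6$ (the case $l=2m$, $m=1$ of \cref{lem:oscint}), so exactly five of these orders are $\leq 4$ and the first five derivative vectors are still independent; $\omega(s_0,t_0)$ is then a nonzero quadric cutting out the (hyper)osculating conic --- precisely how the paper uses the formula, e.g.\ when evaluating $\omega$ at the sextactic point $(-1:0:1)$ of the nodal cubic of \cref{ex:nodalcubic}. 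The same holds at a simple inflection point ($l=3$, orders $0,1,2,3,4,6$), where $\omega$ gives the doubled tangent. Degeneration of the span, and identical vanishing of $\omega(s_0,t_0)$, occurs only at inflection points of higher order ($l\geq 4$, where the order $3$ is missing from $\{0,1,2,l,l+1,2l\}$); that is the genuine boundary case of the statement, not the $2$-Weierstrass locus. As written, your proof would establish the formula only away from the sextactic points it is designed to detect, so you should replace the final paragraph by the correct dichotomy above.
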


Note that for an inflection point, $\omega(s_0,t_0)$ is reducible and equals the (double) tangent. For a smooth point that is not an inflection point, $\omega(s_0,t_0)$ and Cayley's osculating conic from \cref{thm:osccon} coincide by uniqueness.  
\begin{example}
The nodal cubic from \vref{ex:nodalcubic} can be given by the parametrization \[\varphi(s,t) = \left(st^2-s^3 : t^3-s^2t : s^3\right).\] 

At a smooth point $\varphi(s_0,t_0)$ the osculating curve of degree 2 has defining polynomial $\omega(s_0,t_0)$ equal to
	{\small
	\begin{align*}
&(2s_0^{10}+5s_0^8t_0^2+60s_0^6t_0^4+45s_0^4t_0^6)x^2+(s_0^{10}+10s_0^8t_0^2+5s_0^6t_0^4)y^2\\
	+\;&(s_0^{10}-5s_0^8t_0^2+10s_0^6t_0^4-10s_0^4t_0^6+5s_0^2t_0^8-t_0^{10})z^2
	-8  (5s_0^7t_0^3+6s_0^5t_0^5+5s_0^3t_0^7)yz\\
	+\;&(3s_0^{10}+70s_0^6t_0^4+40s_0^4t_0^6+15s_0^2t_0^8)xz -8(5s_0^7t_0^3+3s_0^5t_0^5)xy.
	\end{align*}}Evaluating this expression at the point $p = (-1:0:1)=\varphi(1,0)$ gives the same defining polynomial for $O_p$ as before,
	\begin{align*}
	2x^2+y^2+z^2+3xz=0.
	\end{align*}
	\end{example}
\subsection{The Weierstrass weight}
For rational curves, not necessarily cuspidal, information about its $2$-Weierstrass points can be found by direct computation and inspection of the zeros of a homogeneous determinantal polynomial. 
\begin{theorem} \label{thm:ratww}
	Let $C$ be a rational plane curve with parametrization $\varphi(s,t)$, and let $\xi(s,t)$ denote the Wronski determinant 
	\begin{align*}
	\xi(s,t)=\begin{vmatrix}
	\frac{\di^5 (\varphi_0^2)}{\di s^5} & \frac{\di^5 (\varphi_1^2)}{\di s^5} & \frac{\di^5 (\varphi_2^2)}{\di s^5} & \frac{\di^5 (\varphi_1\varphi_2)}{\di s^5} & \frac{\di^5 (\varphi_0\varphi_2)}{\di s^5} & \frac{\di^5 (\varphi_0\varphi_1)}{\di s^5} \\[5pt]
	\frac{\di^5 (\varphi_0^2)}{\di s^4 \di t} & \frac{\di^5 (\varphi_1^2)}{\di s^4 \di t} & \frac{\di^5 (\varphi_2^2)}{\di s^4 \di t} & \frac{\di^5 (\varphi_1\varphi_2)}{\di s^4 \di t} & \frac{\di^5 (\varphi_0\varphi_2)}{\di s^4 \di t} & \frac{\di^5 (\varphi_0\varphi_1)}{\di s^4 \di t} \\[5pt]
	\frac{\di^5 (\varphi_0^2)}{\di s^3 \di t^2} & \frac{\di^5 (\varphi_1^2)}{\di s^3 \di t^2} & \frac{\di^5 (\varphi_2^2)}{\di s^3 \di t^2} & \frac{\di^5 (\varphi_1\varphi_2)}{\di s^3 \di t^2} & \frac{\di^5 (\varphi_0\varphi_2)}{\di s^3 \di t^2} & \frac{\di^5 (\varphi_0\varphi_1)}{\di s^3 \di t^2} \\[5pt]
	\frac{\di^5 (\varphi_0^2)}{\di s^2 \di t^3} & \frac{\di^5 (\varphi_1^2)}{\di s^2 \di t^3} & \frac{\di^5 (\varphi_2^2)}{\di s^2 \di t^3} & \frac{\di^5 (\varphi_1\varphi_2)}{\di s^2 \di t^3} & \frac{\di^5 (\varphi_0\varphi_2)}{\di s^2 \di t^3} & \frac{\di^5 (\varphi_0\varphi_1)}{\di s^2 \di t^3} \\[5pt]
	\frac{\di^5 (\varphi_0^2)}{\di s \di t^4} & \frac{\di^5 (\varphi_1^2)}{\di s \di t^4} & \frac{\di^5 (\varphi_2^2)}{\di s \di t^4} & \frac{\di^5 (\varphi_1\varphi_2)}{\di s \di t^4} & \frac{\di^5 (\varphi_0\varphi_2)}{\di s \di t^4} & \frac{\di^5 (\varphi_0\varphi_1)}{\di s \di t^4} \\[5pt]
	\frac{\di^5 (\varphi_0^2)}{\di t^5} & \frac{\di^5 (\varphi_1^2)}{\di t^5} & \frac{\di^5 (\varphi_2^2)}{\di t^5} & \frac{\di^5 (\varphi_1\varphi_2)}{\di t^5} & \frac{\di^5 (\varphi_0\varphi_2)}{\di t^5} & \frac{\di^5 (\varphi_0\varphi_1)}{\di t^5}
	\end{vmatrix}.
	\end{align*}
	Moreover, let $(s_i:t_i)$ denote the distinct zeros of $\xi(s,t)$, with $i \leq 6(2d-5)$. Then the points $p_i=\varphi(s_i,t_i)$ are $2$-Weierstrass points on $C$, and the $2$-Weierstrass weight $w_{p_i}(2)$ is equal to the order of the zero of $\xi(s,t)$ corresponding to $(s_i:t_i)$.

\begin{remark}
	Note that the zeros in \cref{thm:ratww} correspond to all smooth $2$-Weierstrass points on $C$, but only the singular points with $w_p(2) > 0$, i.e. singular points where at least one branch is a cusp, an inflection point or a sextactic point.
	\end{remark}

\begin{proof}
First observe that whenever $\xi(s,t)$ vanishes, the corresponding point on $v_2(C)$ is either singular, or there exists a hyperplane in $\mathbb{P}^5$ that is hyperosculating to $v_2(C)$. As before, this hyperplane corresponds to a hyperosculating curve of degree 2 with respect to a point $p$ on $C$ in $\P^2$, hence determining an inflection point or a sextactic point.
 
For a smooth curve, \cite[VII.4, pp.~233--246]{Mir95} ensures that the multiplicity of a zero of $\xi(s,t)$ equals the $2$-Weierstrass weight of the corresponding point. Note that this is the same as the flattening points of the Veronese embedding, as described in \cite[p.~15]{Arn96}. This takes care of the smooth points. Alternatively, the below analysis for singular points can be applied to smooth points.
	
In the case of singular points, we consider each branch separately, and perform a local computation. So choose one branch and perform a linear transformation on $C$ so that the chosen branch of $p$ corresponds to the parameter values $(s:t)=(1:0)$, and so that its tangent is $y=0$. Moreover, by abuse of notation, observe that  	
\begin{align*}
\xi(1,t)=\xi(t)=\begin{vmatrix}
v_2(C)(t) \\
v_2(C)'(t) \\
v_2(C)''(t) \\
v_2(C)^{3}(t) \\
v_2(C)^{4}(t) \\
v_2(C)^{5}(t) \\
\end{vmatrix}.
\end{align*}

Assume first that the chosen branch can be parametrized by $(t^m:at^l+\ldots:1)$, with $a \neq 0$ and $l \neq 2m$. Substituting this into $\xi(t)$, computing the determinant, and comparing with the proof of \cref{thm:SPF} in \cref{pf:SPF}, it follows that \[\ord_t \xi(t) = 4m+4l-15= \sum_{i=0}^5(h_i-i).\] 

If $l = 2m$, first transform the branch of $C$ at $p$ so that it is given by the parametrization $(t^m:at^{2m}+\ldots:1)$, where $a \neq 0$. Subsequently, by applying the Veronese embedding, consider the curve \[\rho(t)=(t^{2m}:a^2t^{4m}+\ldots:1:t^m:at^{2m
}+\ldots:at^{3m}+\ldots)\] in $\P^5$, which by a suitable linear transformation in $\P^5$ can be given by the parametrization \[\sigma (t)=(t^{2m}:a^2t^{4m}+\ldots:1:t^m:a_ct^{c
}+\ldots:at^{3m}+\ldots),\] for an $a_c \neq 0$, and $c>2m$, $c \neq 3m,4m$, see \vref{lem:oscint}. Then, for a parametrized curve $\psi$ in $\P^5$, consider the determinant 
	\begin{align*}
W_{\psi}(t)=\begin{vmatrix}
\psi(t) \\
\psi'(t)\\
\psi''(t)\\
\psi^{(3)}(t)\\
\psi^{(4)}(t)\\
\psi^{(5)}(t)\\
\end{vmatrix}.
\end{align*}
Straightforward computations give that the order of $t$ in $W_{\sigma}(t)$ is $10m+c-15$. Now, $\ord_tW_{\sigma}(t)=\ord_tW_{\rho}(t)=\ord_t\xi(t)$, hence $\ord_t\xi(t)=10m+c-15$. Moreover, notice that in this case the inverse image of the hyperplane $x_4=0$ under the linear transformation in $\P^5$ and the Veronese embedding corresponds to a conic in $\P^2$ that intersects the branch of $C$ at $p$ with intersection multiplicity $h_5=c$. Hence, we have that $\ord_t \xi(t) = \sum_{i=0}^5(h_i-i)$.

Performing a similar analysis on all branches of $C$ at $p$, and summing up, we reach $w_p(2)$.
\end{proof}
\end{theorem}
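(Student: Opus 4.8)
The plan is to identify the $2$-Weierstrass points of $C$ with the \emph{flattening} (ramification) points of the Veronese image $v_2(C)\subset\P^5$, and then to invoke the classical principle that the Wronskian of a curve in projective space vanishes precisely at its flattening points, with order of vanishing equal to the ramification weight. First I would observe that the six coordinates $\varphi_0^2,\dots,\varphi_0\varphi_1$ of $v_2(C)$ are the pullbacks of the standard basis of $\C[x,y,z]_2$, so that a hyperplane section of $v_2(C)$ is exactly a conic section of $C$; hence a hyperplane hyperosculating $v_2(C)$ at a point corresponds to a conic hyperosculating $C$ there, i.e. to an inflection point or a sextactic point (while a singular point of $v_2(C)$ corresponds to a singular point of $C$). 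This identifies the support of $\xi(s,t)$ with the $2$-Weierstrass locus. Since each entry of $\xi$ is a binary form of degree $2d$ differentiated five times, $\xi$ has degree $6(2d-5)$, which gives the bound $i\le 6(2d-5)$ on the number of distinct zeros and matches the total from \cref{prop:BG34} with $g=0$, $r=5$.

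For the weight at a smooth point $p$ I would cite the Wronskian / Brill--Segre theory as developed in \cite[VII.4]{Mir95} (equivalently the flattening-point formalism of \cite[p.~15]{Arn96}): the order of $\xi$ at $p$ equals the ramification index of the complete degree-$2$ linear system at $p$, which is by definition $w_p(2)$. The real content is at the singular points, which I would handle one branch at a time. Fix a branch of $p$; after a projective change of coordinates I may assume this branch sits at $(s:t)=(1:0)$ with tangent $y=0$, pass to the affine parameter $t$, and write the Puiseux form $(t^m:at^l+\cdots:1)$ with $a\neq0$, as in \cref{lem:oscint}. Because the Wronskian changes only by a nonvanishing scalar under a linear change of coordinates on $\P^5$ and by a nonvanishing factor under reparametrizations fixing $(1:0)$, I am free to normalize the local Veronese parametrization.

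If $l\neq2m$, the six monomials pull back to powers of $t$ with the six \emph{distinct} exponents $0,m,l,2m,m+l,2l$ recorded in \cref{lem:oscint}; substituting into $\xi(1,t)$ and expanding the $6\times6$ determinant, the leading behaviour is governed by these exponents and one reads off $\ord_t\xi=\sum_{i=0}^5(h_i-i)=4m+4l-15$, exactly the weight computed in the proof of \cref{thm:SPF}. If $l=2m$, two of the pulled-back monomials share the exponent $2m$ and the naive leading term collapses; here I would mimic the $\P^5$-normalization producing $\sigma(t)$ in \cref{lem:oscint}, replacing the offending coordinate by a combination whose lowest-order term has exponent $c$ (with $c>2m$ and $c\neq3m,4m$), compute the resulting Wronskian directly to obtain order $10m+c-15$, and conclude $\ord_t\xi=10m+c-15$ since passing through the Veronese embedding and the subsequent $\P^5$-linear transformation changes the Wronskian only by nonvanishing factors. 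In both cases $\ord_t\xi=\sum_{i=0}^5(h_i-i)=w_p(2)$ for the branch, and summing over the branches of $p$ gives $w_p(2)$; applied to every zero of $\xi$, this is the assertion.

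The step I expect to be the main obstacle is the $l=2m$ case: one must justify that the auxiliary linear transformation in $\P^5$ does not alter the order of vanishing of $\xi$, and then carry the bookkeeping of the $6\times6$ determinant of fifth derivatives far enough to see that the surviving lowest exponent is genuinely $c$ and not some spurious smaller value arising from the coincidence $l=2m$. Everything else is either a direct citation (the smooth case) or a transcription of the local intersection-multiplicity analysis already carried out for \cref{lem:oscint} and \cref{thm:SPF}.
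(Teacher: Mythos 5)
Your proposal is correct and follows essentially the same route as the paper's proof: identifying $2$-Weierstrass points with flattening points of $v_2(C)$, citing \cite[VII.4]{Mir95} for the smooth case, and treating singular points branch by branch via the Puiseux normal form, with the $l=2m$ case handled by the same $\P^5$-linear normalization to a parametrization $\sigma(t)$ whose Wronskian has order $10m+c-15$, unchanged because the transformation only multiplies the determinant by a nonzero constant. The obstacle you flag (verifying the surviving exponent is genuinely $c$ when $l=2m$) is exactly the computation the paper also leaves as ``straightforward,'' so no gap beyond the paper's own level of detail remains.
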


\begin{remark}
Observe that the polynomial $\xi$ is homogeneous in $s$ and $t$ of degree $6(2d - 5)$. Since the $2$-Weierstrass weights add up to this number, \cref{thm:ratww} provides another proof of \cref{cor:rational-SPF}.
\end{remark}

 We now revisit \cref{ex:QuinticC3B,ex:binomial}, and compute the $2$-Weierstrass points and weights using the Wronski determinant in \cref{thm:ratww}.

\begin{example}\label{ex:QuinticC3B2}
	Let $C$ be the rational cuspidal quintic from \vref{ex:QuinticC3B}, with parametrization \[\varphi(s,t)=(s^5 : s^3t^2 : st^4+t^5).\]
	Computing the Wronski determinant gives
	\begin{align*}
	\xi(s,t) = -2^{24}\cdot3^{12}\cdot5^2\cdot7^4\cdot s^{17}t^{10}(192s^3 + 1680s^2t + 5275st^2 + 5250t^3).
	\end{align*}
The cusps $p_1$ and $p_2$ correspond to the parameters $(0:1)$ and $(1:0)$, respectively, while the inflection point $p_3$ and the sextactic points $p_4$ and $p_5$ correspond to zeros of $192s^3 + 1680s^2t + 5275st^2 + 5250t^3$. Determining the order of the corresponding zeros, we conclude, as in \cref{ex:QuinticC3B}, that $w_{p_1}(2)=17$, $w_{p_2}(2)=10$, and $w_{p_3}(2)=w_{p_4}(2)=w_{p_5}(2)=1$.
\end{example}

\begin{example}\label{ex:binomial2}
	Let $C$ be the rational cuspidal quintic from \vref{ex:binomial}, with parametrization \[\varphi(s,t)=(s^5 : s^3t^2 : t^5).\]	
	
	Computing the Wronski determinant gives
	\begin{align*}
	\xi(s,t) = -2^{25}\cdot 3^{13}\cdot 5^{5}\cdot 7^{5} \cdot s^{17}t^{13}.
	\end{align*}
The cusp $p_1$ corresponds to the parameter value $(0:1)$, and the exponent of $s$ gives the $2$-Weierstrass weight, $w_{p_1}(2)=17$. The cusp $p_2$ corresponds to $(1:0)$, thus $w_{p_2}(2)=13$. 
\end{example}

\begin{remark}\label{rem:binomial}
Note that the curve from \cref{ex:binomial,ex:binomial2} is an example of a cuspidal curve $C_{m,l}$ of degree $l$ with defining polynomial \[F=x^mz^{l-m}-y^l.\] For any $l,m \in \mathbb{N}$, with $m<l$ and $\gcd(m,l)=1$, the curve $C_{m,l}$ is cuspidal; bicuspidal when $1 <m <l-1$, and unicuspidal with an inflection point otherwise. It can be shown that these curves have no other $n$-Weierstrass points for $1 \leq n < l$. A proof in a more general setting can be found in \cite[Section 4]{BG97}, but, additionally, it is possible to construct a direct proof of this claim with methods from the present article, see \cite[Theorem 5.4.3, p.~54]{Mau17}.
\end{remark}

\section*{Acknowledgements}
This article is based on results from the master's thesis of the first author \cite{Mau17}, supervised by the second author. We would like to thank Geir Ellingsrud for his senior supervision and support of this paper, Kristian Ranestad for interesting discussions, and Ragni Piene, Georg Muntingh, the editor and the referee for valuable comments. Moreover, we want to express our gratitude to Live Rasmussen and the Science Library at the University of Oslo for making this project possible and for still having a printed edition of \emph{The collected mathematical papers} of Arthur Cayley in-house and available on a Saturday morning.

\bibliographystyle{amsalpha2}
\bibliography{bibMS}{}

\end{document}